\newtheorem{theorem}{Theorem}
\DeclareMathOperator{\hr}{\mathbb H^2\times \mathbb R}
\DeclareMathOperator{\rr}{\mathbb R}
\DeclareMathOperator{\tor}{\mathbb T}
\DeclareMathOperator{\hh}{\mathbb H^2}
\newtheorem{proposition}{Proposition}
\newtheorem{remark}{Remark}
\theoremstyle{definition}
\numberwithin{equation}{section}
\newtheorem*{Ak}{Acknowledgements}
\title{THE ALEXANDROV PROBLEM IN A QUOTIENT SPACE OF $\mathbb H^2\times \rr$}
\author{Ana Menezes}
\date{}
\begin{document}
\maketitle

\begin{abstract}
We prove an Alexandrov type theorem for a quotient space of $\hr.$ More precisely we classify the compact embedded surfaces with constant mean curvature in the quotient of $\hr$ by a subgroup of isometries generated by a horizontal translation along horocycles of $\mathbb H^2$ and a vertical translation. Moreover, we construct some examples of periodic minimal surfaces in $\hr$ and we prove a multi-valued Rado theorem for small perturbations of the helicoid in $\hr$.

\end{abstract}

\begin{flushleft}
\textit{2010 Mathematics Subject Classification:} 53A10, 53C42.
\end{flushleft}

\begin{flushleft}
\textit{Keywords:} Constant mean curvature surface, periodic surface, Alexandrov reflection.
\end{flushleft}

\section{Introduction}

Alexandrov, in 1962, proved that the only compact embedded constant mean curvature hypersurface in $\rr^n, \mathbb H^n$ and $\mathbb S^n_+$ is the round sphere. Since then, many people have proved an Alexandrov type theorem in other spaces.

For instance, W.T. Hsiang and W.Y. Hsiang \cite{HsiangHsiang} showed that a compact embedded constant mean curvature surface in $\hr$ or in $\mathbb S^2_+\times\rr$ is a rotational sphere. They used the Alexandrov reflection method with vertical planes in order to prove that for any horizontal direction, there is a vertical plane of symmetry of the surface orthogonal to that direction.

To apply the Alexandrov reflection method we need to start with a vertical plane orthogonal to a given direction that does not intersect the surface, and in $\mathbb S^2\times\rr$ this fact is guaranteed by the hypothesis that the surface is contained in the product of a hemisphere with the real line. We remark that in $\mathbb S^2\times\rr,$ we know that there are embedded rotational constant mean curvature tori, but the Alexandrov problem is not completely solved in $\mathbb S^2\times\rr.$ In other simply connected homogeneous spaces with a $4$-dimensional isometry group (Nil$_3, \widetilde{\mbox{PSL}}_2(\rr),$ some Berger spheres), we do not know if the solutions to the Alexandrov problem are spheres. 

In Sol$_3,$ Rosenberg proved that an embedded compact constant mean curvature surface is a sphere \cite{DanielMira}. 

Recently, Mazet, Rodr\' iguez and Rosenberg \cite{Mazet} considered the quotient of $\hr$ by a discrete group of isometries of $\hr$ generated by a horizontal translation along a geodesic of $\mathbb H^2$ and a vertical translation. They classified the compact embedded constant mean curvature surfaces in the quotient space. Moreover, they constructed examples of periodic minimal surfaces in $\hr,$ where by periodic we mean a surface which is invariant by a non-trivial discrete group of isometries of $\hr.$

In this paper we also consider periodic surfaces in $\hr.$ The discrete groups of isometries of $\hr$ we consider are generated by a horizontal translation $\psi$ along horocycles $c(s)$ of $\mathbb H^2$ and/or a vertical translation $T(h)$ for some $h>0.$ In the case the group is the $\mathbb Z^2$ subgroup generated by $\psi$ and $T(h),$ the quotient space $\mathcal M=\hr/[\psi,T(h)]$ is diffeomorphic to $\mathbb T^2\times\rr,$ where $\mathbb T^2$ is the 2-torus. Moreover, $\mathcal M$ is foliated by the family of tori $\tor(s)=c(s)\times\rr/[\psi,T(h)]$ which are intrinsically flat and have constant mean curvature $1/2.$ We prove an Alexandrov type theorem in this quotient space $\mathcal M$.

Moreover, in the last part of this paper, we consider a multi-valued Rado theorem for small perturbations of the helicoid. Rado's theorem (see \cite{Rado}) is one of the fundamental results of minimal surface theory. It is connected to the famous Plateau problem, and states that if $\Omega\subset \rr^2$ is a convex subset and $\Gamma\subset \rr^3$ is a simple closed curve which is graphical over $\partial \Omega,$ then any compact minimal surface $\Sigma\subset\rr^3$ with $\partial \Sigma=\Gamma$ must be a disk which is graphical over $\Omega,$ and then unique, by the maximum principle. In \cite{DeanTinaglia}, Dean and Tinaglia proved a generalization of Rado's theorem. They showed that for a minimal surface of any genus whose boundary is almost graphical in some sense, the minimal surface must be graphical once we move sufficiently far from the boundary. In our work, we consider this problem for minimal surfaces in $\hr$ whose boundary is a small perturbation of the boundary of a helicoid, and we prove that the solution to the Plateau problem is the only compact minimal disk with that boundary (see Theorem $\ref{rado}$). 

This paper is organized as follows. In Section \ref{preliminaries}, we introduce some notation. In Section \ref{sec-alexandrov}, we classify the compact embedded constant mean curvature surfaces in the space $\mathcal M,$ that is, we prove an Alexandrov type theorem for doubly periodic $H$-surfaces (see Theorem $\ref{alexandrov}).$ In Section \ref{sec-examples}, we construct some examples of periodic minimal surfaces in $\hr.$ In Section \ref{sec-rado}, we prove a multi-valued Rado theorem for small perturbations of the helicoid (see Theorem $\ref{rado}$).

{\small
\begin{Ak} This work is part of the author's Ph.D. thesis at IMPA. The author would like to express her sincere gratitude to her advisor Prof. Harold Rosenberg for his constant encouragement and guidance throughout the preparation of this work. 
 The author was financially suported by CNPq-Brazil and IMPA.
\end{Ak}}

\section{Preliminaries}
\label{preliminaries}
Throughout this paper, the Poincar\' e disk model is used for the hyperbolic plane; that is,
$$
\mathbb H^2=\{(x,y)\in\rr^2|\ x^2+y^2<1\}
$$
with  the hyperbolic metric $g_{-1}=\frac{4}{(1-x^2-y^2)^2}g_0,$ where $g_0$ is the Euclidean metric in $\rr^2.$ In this model, the asymptotic boundary $\partial_{\infty}\mathbb H^2$ of $\mathbb H^2$ is identified with the unit circle. Consequently, any point in the closed unit disk is viewed as either a point in $\mathbb H^2$ or a point in $\partial_{\infty}\mathbb H^2.$ We denote by $\textbf{0}$ the origin of $\hh.$

In $\hh$ we consider $\gamma_0, \gamma_1$ the geodesic lines $\{x=0\}, \{y=0\},$ respectively. For $j=0,1,$ we denote by $Y_j$ the Killing vector field whose flow $(\phi_l)_{l\in (-1,1)}$ is given by hyperbolic translation along $\gamma_j$ with $\phi_l(\textbf{0})=$ $(l\sin \pi j, l\cos \pi j)$ and $(\sin \pi j, \cos \pi j)$ as attractive point at infinity. We call $(\phi_l)_{l\in (-1,1)}$ the flow of $Y_j$ even though the family $(\phi_l)_{l\in (-1,1)}$ is not parameterized at the right speed.


We denote by $\pi: \hr\rightarrow\hh$ the vertical projection and we write $t$ for the height coordinate in $\hr.$ In what follows, we will often identify the hyperbolic plane $\hh$ with the horizontal slice $\{t=0\}$ of $\hr.$ The vector fields $Y_j,j=0,1,$ and their flows naturally extend to horizontal vector fields and their flows in $\hr.$

Consider any geodesic $\gamma$ that limits to the point $p_0\in\partial_\infty\hh$ at infinity parametrized by arc length. Let $c(s)$ denote the horocycle in $\hh$ tangent to $\partial_\infty \hh$ at $p_0$ that intersects $\gamma$ at $\gamma(s).$ Given two points $p,q \in c(s),$ we denote by $\psi:\hr\rightarrow\hr$ the parabolic translation along $c(s)$ such that $\psi(p)=q.$


We write $\overline{pq}$ to denote the geodesic arc between the two points $p,q$ of $\hr.$

\section{The Alexandrov problem for doubly periodic constant mean curvature surfaces}
\label{sec-alexandrov}

Take two points $p,q$ in a horocycle $c(s),$ and let $\psi$ be the parabolic translation along $c(s)$ such that $\psi(p)=q.$ We have $\psi(c(s))=c(s)$ for all $s.$ Consider $G$ the $\mathbb Z^2$ subgroup of isometries of $\hr$ generated by $\psi$ and a vertical translation $T(h),$ for some positive $h.$ We denote by $\mathcal M$ the quotient of $\hr$ by $G.$ The manifold $\mathcal M$ is diffeomorphic but not isometric to $\tor^2\times \rr$ and is foliated by the family of tori $\tor(s)=(c(s)\times\rr)/G,$ $s\in \rr,$ which are intrinsically flat and have constant mean curvature $1/2.$ Thus the tori $\tor(s)$ are examples of compact embedded constant mean curvature surfaces in $\mathcal M$.


We have the following answer to the Alexandrov problem in $\mathcal M.$

\begin{theorem}
\label{alexandrov}
Let $\Sigma \subset \mathcal M$ be a compact immersed surface with constant mean curvature $H$. Then $H\geq \frac{1}{2}.$ Moreover:

\begin{enumerate}
\item If $H=\frac{1}{2},$ then $\Sigma$ is a torus $\mathbb T(s),$ for some $s.$

\item If $H>\frac{1}{2}$ and $\Sigma$ is embedded, then $\Sigma$ is either the quotient of a rotational sphere, or the quotient of a vertical unduloid (in particular, a vertical cylinder over a circle).
\end{enumerate}

\end{theorem}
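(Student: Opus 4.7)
My strategy combines a sweep-out by the constant-mean-curvature-$\frac12$ tori $\mathbb{T}(s)$ with Alexandrov-type reflection arguments, reducing matters to the classical Hsiang--Hsiang theorem \cite{HH} on compact CMC surfaces in $\hr$. I would lift $\Sigma$ to $\tilde\Sigma\subset\hr$; since $\Sigma$ is compact in $\mathcal{M}$, $\tilde\Sigma$ lies in a horocyclic slab $c^{-1}([s_-,s_+])\times\rr$ and is tangent to $c(s_+)\times\rr$ at some ``rightmost'' point. Since $c(s_+)\times\rr$ has mean curvature $\frac12$ with mean curvature vector pointing into the horodisk based at $(1,0)$, the outward unit normal of $\tilde\Sigma$ at this tangency agrees with that mean curvature vector, and the geometric maximum principle yields $H\geq\frac12$. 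If $H=\frac12$, the equality case of the maximum principle and the connectedness of $\tilde\Sigma$ force $\tilde\Sigma=c(s_+)\times\rr$, hence $\Sigma=\mathbb{T}(s_+)$, which proves item (1).

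\textbf{Stabilizer analysis for $H>\frac12$.} I would fix a connected component $\tilde\Sigma_0\subset\tilde\Sigma$ and set $G_0=\mathrm{Stab}_G(\tilde\Sigma_0)$, so that $\tilde\Sigma_0/G_0\cong\Sigma$ is compact. The pivotal structural claim, which I expect to be the main obstacle of the proof, is that
\[
G_0\subset\langle T(h)\rangle,
\]
i.e., $G_0$ contains no element $\psi^a T(h)^b$ with $a\neq 0$. Were the claim false, $\tilde\Sigma_0$ would be horocyclically unbounded yet trapped in a bounded horocyclic slab, extending as a ``parabolic screw-invariant'' CMC surface. My plan to derive a contradiction is to apply Alexandrov reflection in $\mathcal{M}$ using the one-parameter family of totally geodesic vertical planes $P_t=\psi^t(\gamma_1)\times\rr$---each reflection $R_{P_t}$ conjugates $\psi$ to $\psi^{-1}$ and commutes with $T(h)$, hence descends to an isometry of $\mathcal{M}$---and combine this with the classification of embedded CMC surfaces in $\hr$ invariant under a one-parameter parabolic subgroup, which for $H\neq\frac12$ either fails to produce embedded examples or produces only horocycle cylinders. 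A careful orientation analysis at a horocycle-cylinder tangency in the opposite direction should then force $H\leq\frac12$, contradicting $H>\frac12$.

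\textbf{Classification of $\tilde\Sigma_0$ and conclusion.} Granted the claim, either $G_0=\{e\}$, in which case $\tilde\Sigma_0$ is a compact embedded CMC surface in $\hr$ and the Hsiang--Hsiang theorem \cite{HH} identifies it as a rotational sphere with vertical axis; or $G_0=\langle T(kh)\rangle$ for some integer $k\geq 1$, in which case $\tilde\Sigma_0$ is vertically $kh$-periodic with bounded horizontal projection in $\hh$ (since $\tilde\Sigma_0/G_0$ is compact). The horizontal boundedness in the second case enables the classical Alexandrov reflection in $\hr$ with totally geodesic vertical planes perpendicular to arbitrary horizontal directions; as these reflections commute with vertical translations, the $T(kh)$-periodicity is preserved throughout the moving-plane argument, yielding, for every horizontal direction $v$, a vertical plane of symmetry of $\tilde\Sigma_0$ perpendicular to $v$. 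The common intersection of this one-parameter family of symmetry planes is a vertical line, necessarily the axis of rotational symmetry, so $\tilde\Sigma_0$ is a vertical unduloid (with vertical cylinders over circles as the degenerate case), completing the proof of item (2).
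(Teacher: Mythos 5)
Your opening paragraph (the bound $H\geq\frac{1}{2}$ via the tangency with the outermost torus, and rigidity when $H=\frac{1}{2}$) is correct and coincides with the paper's argument, and your endgame is also the right one: a compact component of the lift is a rotational sphere by Hsiang--Hsiang, and a vertically periodic, cylindrically bounded component becomes rotational by moving vertical planes in every horizontal direction, hence an unduloid or cylinder. The genuine gap is the ``pivotal structural claim'' $G_0\subset\langle T(h)\rangle$: you correctly isolate it as the heart of the matter, but the mechanism you propose for it does not work. First, the planes $P_t=\psi^t(\gamma_1)\times\rr$ all pass through the parabolic fixed point $(1,0)$; although the reflections $R_{P_t}$ normalize $G$ and descend to $\mathcal M$, the image of $P_t$ in $\mathcal M$ does \emph{not} separate $\mathcal M$ (upstairs the complement of $\bigcup_n\psi^n(P_t)$ is a union of slabs permuted transitively by $\psi$), so ``the part of $\Sigma$ on one side of $P_t$'' is undefined and the moving-plane scheme cannot be run downstairs. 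Passing to the cover $\hh\times\mathbb S^1$ does not help either: if $\tilde\Sigma_0$ really were invariant under some $\psi^aT(h)^b$ with $a\neq0$ --- the very case you must exclude --- it would meet every plane of your family, so there is no initial plane disjoint from the surface and the reflection never starts. Second, the classification you invoke concerns surfaces invariant under the full one-parameter parabolic group, whereas your surface would only be invariant under a discrete parabolic subgroup, so it does not apply. Third, the concluding ``orientation analysis should force $H\leq\frac{1}{2}$'' is exactly the delicate step, and no reason is given why the mean curvature vector at the inner tangency with $\tilde c(s_0)$ points toward $(1,0)$.

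For what is actually needed: the paper works in $\widetilde{\mathcal M}=\hh\times\mathbb S^1$ and reflects across the annuli $P(t)=l_t\times\mathbb S^1$, where the $l_t$ are the geodesics orthogonal to a fixed geodesic $\gamma$ ending at $(1,0)$. These annuli do separate, eventually leave the horocyclic slab containing the lift, and meet it in compact sets, so the sweep has a starting position. The dichotomy is then: if some geodesic asymptotic to $(1,0)$ meets the lift in two or more points, the reflection reaches a first contact, the lift is symmetric about some $P(t_1)$ and hence compact in $\hh\times\mathbb S^1$ --- which in particular rules out any parabolic element in the stabilizer and reduces to your two good cases; if every such geodesic meets the lift in at most one point, then the ray $\beta^-$ of the geodesic through the tangency point $\tilde p\in\tilde c(s_0)$ lies entirely in the non-mean-convex component $B$, which pins down the direction of the mean curvature vector at $\tilde p$ and gives $H\leq\frac{1}{2}$, a contradiction when $H>\frac{1}{2}$. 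You would need to supply an argument of this kind (or an equivalent substitute) to establish your claim; as written, the proof is incomplete at its central step.
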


\begin{proof} 
Let $\Sigma$ be a compact immersed surface in $\mathcal M$ with constant mean curvature $H.$ As $\Sigma$ is compact, there exist $s_0\leq s_1\in \rr$ such that $\Sigma$ is between $\tor(s_0)$ and $\tor(s_1),$ and it is tangent to $\tor(s_0),\tor(s_1)$ at points $q,p,$ respectively, as illustrated in Figure \ref{fig1}.

\begin{figure}[h!]
  \centering
 \includegraphics[height=4cm]{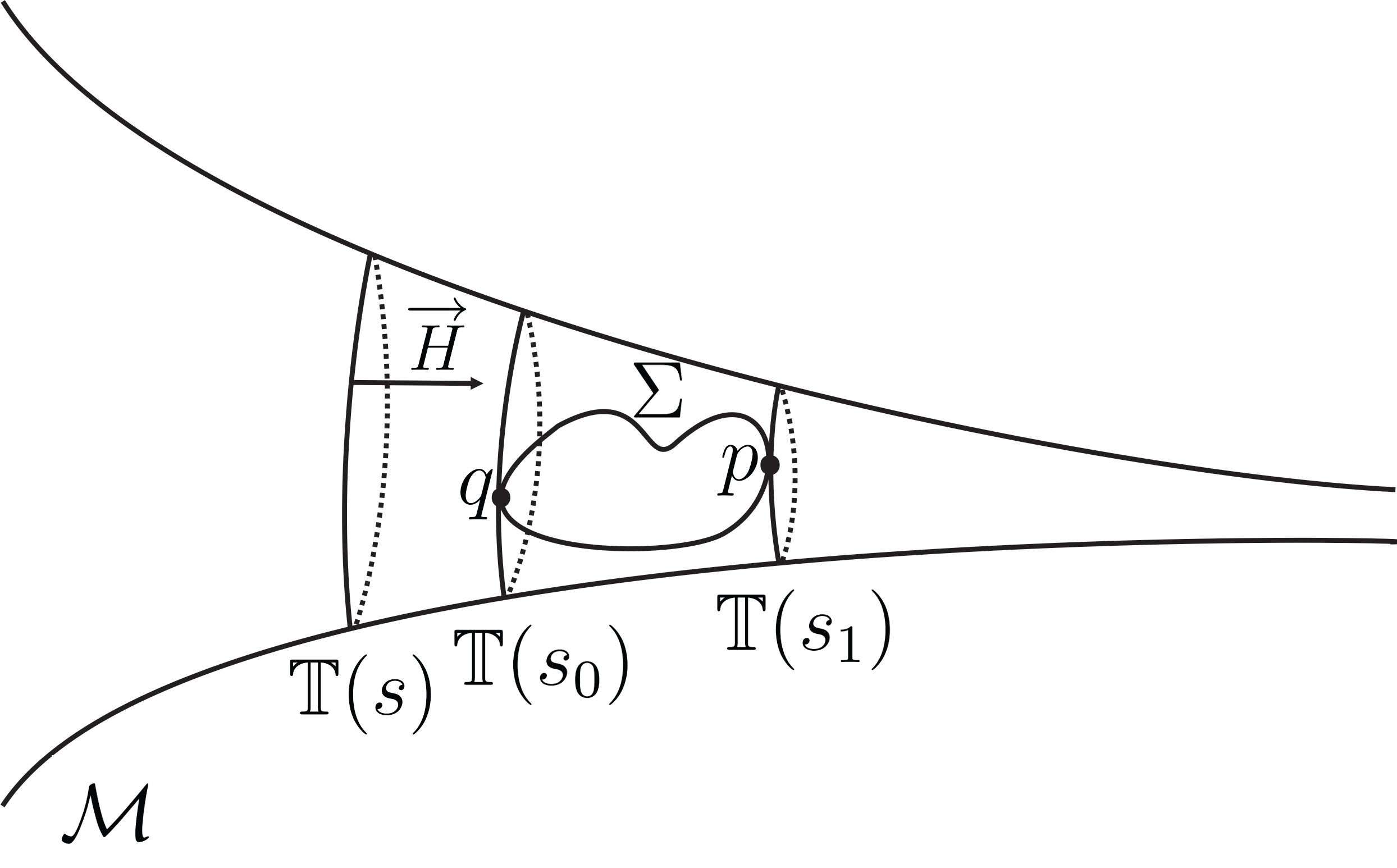}
\caption{$\Sigma\subset \mathcal M.$}
\label{fig1}
\end{figure}

For $s<s_0,$ the torus $\tor(s)$ does not intersect $\Sigma,$ and $\Sigma$ stays in the mean convex region bounded by $\tor(s)$. 
By comparison at $q,$ we conclude that $H\geq \frac{1}{2}.$ If $H=\frac{1}{2},$ then by the maximum principle, $\Sigma$ is the torus $\tor(s_0),$ and we have proved the first part of the theorem.


To prove the last part, suppose $\Sigma$ is embedded and consider the quotient space $\widetilde{\mathcal M}=\mathbb H^2\times\rr/[T(h)],$ which is diffeomorphic to $\mathbb H^2\times\mathbb S^1.$ Take a connected component $\widetilde{\Sigma}$ of the lift of $\Sigma$ to $\widetilde{\mathcal M},$ and denote by $\tilde{c}(s)$ the surface $c(s)\times \mathbb S^1.$ Observe that $\tilde{c}(s)$ is the lift of $\tor(s)$ to $\widetilde{\mathcal M}.$ Moreover, let us consider two points $\tilde{p},\tilde{q}\in\widetilde{\Sigma}$ whose projections in $\mathcal M$ are the points $p,q,$ respectively. 


It is easy to prove that $\widetilde\Sigma$ separates $\widetilde{\mathcal M}.$ In fact, suppose by contradiction this is not true, then we can consider a geodesic arc $\alpha: (-\epsilon, \epsilon)\rightarrow \widetilde{\mathcal M}$ such that $\alpha(0)\in \widetilde\Sigma, \alpha'(0)\in T\widetilde\Sigma^{\bot}$ and we can join the points $\alpha(-\epsilon), \alpha(\epsilon)$ by a curve that does not intersect $\widetilde\Sigma,$ hence we obtain a Jordan curve, which we still call $\alpha,$ whose intersection number with $\widetilde\Sigma$ is 1 modulo 2. 
Notice that the distance between $\widetilde\Sigma$ and $\tilde{c}(s_0)$ is bounded. Since we can homotop $\alpha$ so it is arbitrarily far from $\tilde{c}(s_0),$ we conclude that a translate of $\alpha$ does not intersect $\widetilde\Sigma,$ contradicting the fact that the intersection number of $\alpha$ and $\widetilde\Sigma$ is 1 modulo 2. Thus $\widetilde\Sigma$ does separate $\widetilde{\mathcal M}.$

Let us call $A$ the mean convex component of $\widetilde{\mathcal M}\setminus\widetilde\Sigma$ with boundary $\widetilde\Sigma$ and $B$ the other component. Hence $\widetilde{\mathcal M}\setminus\widetilde\Sigma=A\cup B.$ 




Let $\gamma$ be a geodesic in $\mathbb H^2$ that limits to $p_0\in\partial_\infty\mathbb H^2, \gamma(+\infty)=p_0$ (the point where the horocycles $c(s)$ are centered) and let us assume that $\gamma$ intersects $\widetilde\Sigma$ in \textit{at least} two points. 

Consider $(l_t)_{t\in \rr}$ the family of geodesics in $\mathbb H^2$ orthogonal to $\gamma$ and denote by $P(t)$ the totally geodesic vertical annulus $l_t\times \mathbb S^1$ of $\widetilde{\mathcal M}=\mathbb H^2\times \mathbb S^1$ (see Figure \ref{gamma1}). Since $\widetilde\Sigma$ is a lift of the compact surface $\Sigma,$ it stays in the region between $\tilde{c}(s_0)$ and $\tilde{c}(s_1),$ and the distance from any point of $\widetilde\Sigma$ to $\tilde{c}(s_0)$ and to $\tilde{c}(s_1)$ is uniformly bounded. 

\begin{figure}[h!]
\centering
\includegraphics[height=4.7cm]{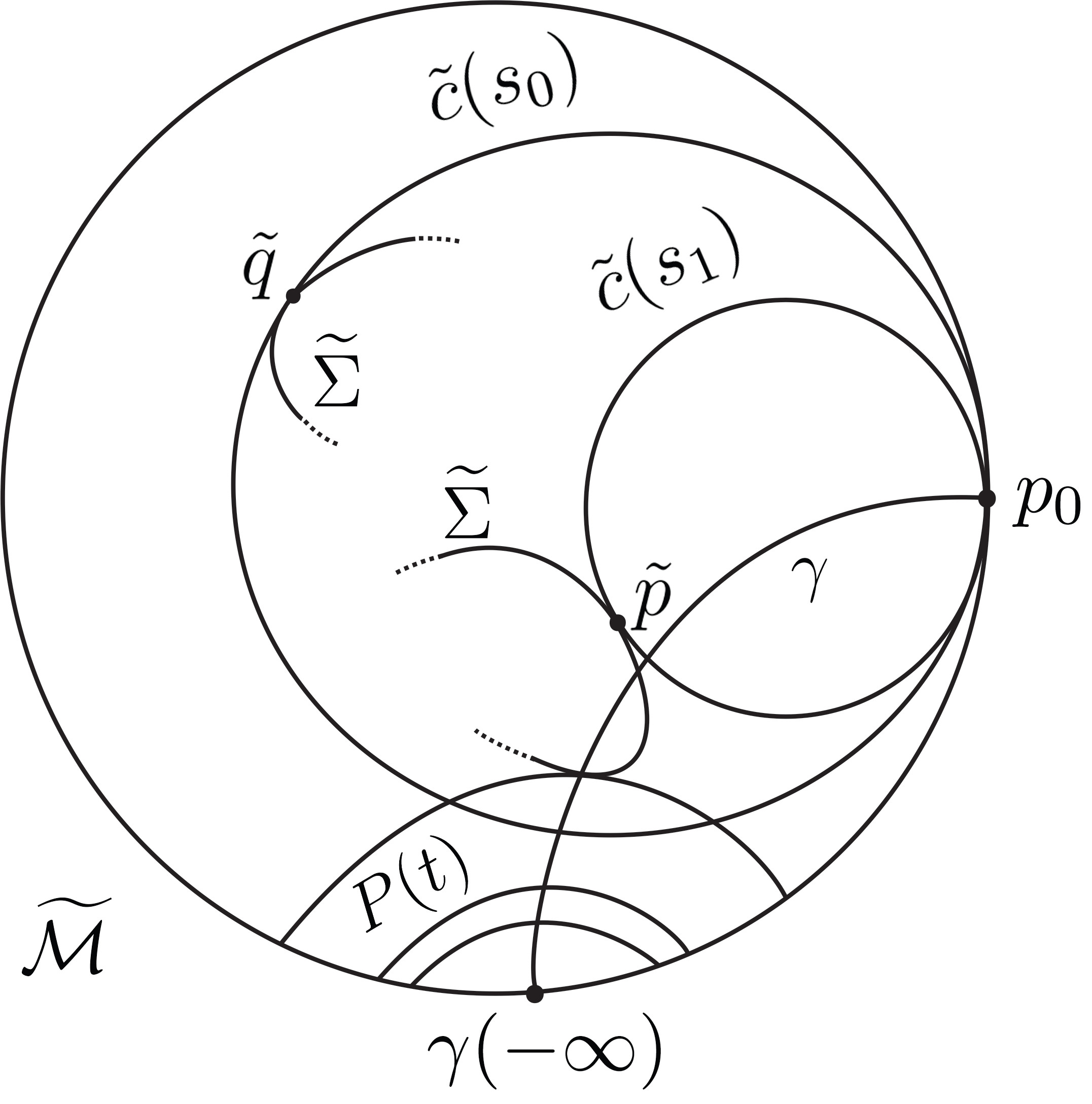}
\caption{The family of totally geodesic annuli $P(t).$}
\label{gamma1}
\end{figure}

By our choice of $\gamma,$ the ends of each $P(t)$ are outside the region bounded by $\tilde{c}(s),$ hence $P(t)\cap\widetilde\Sigma$ is compact for all $t.$ Moreover, for $t$ close to $-\infty$, $P(t)$ is contained in $B$ and $P(t)\cap\widetilde\Sigma$ is empty. Then start with $t$ close to 
$-\infty$ and let $t$ increase until a first contact point between $\widetilde\Sigma$ and some vertical annulus, say $P(t_0).$ In particular, we know that the mean curvature vector of $\widetilde\Sigma$ does not point into $\bigcup_{t\leq t_0}P(t).$


Continuing to increase $t$ and starting the Alexandrov reflection procedure for $\widetilde\Sigma$ and the family of vertical totally geodesic annuli $P(t),$ we get a first contact point between the reflected part of $\widetilde\Sigma$ and $\widetilde\Sigma,$ for some $t_1\in \rr.$ Observe that this first contact point occurs because we are assuming that the geodesic $\gamma$ intersects $\widetilde\Sigma$ in at least two points.

Then $\widetilde\Sigma$ is symmetric with respect to $P(t_1).$ As $\widetilde\Sigma \cap\left(\bigcup_{t_0\leq t\leq t_1} P(t)\right)$ is compact, $\widetilde\Sigma$ is compact. Hence, given any horizontal geodesic $\alpha$ we can apply the Alexandrov procedure with the family of totally geodesic vertical annuli $Q(t)=\tilde{l}_t\times \mathbb S^1,$ where $(\tilde{l}_t)_{t\in\rr}$ is the family of horizontal geodesics orthogonal to $\alpha,$ and we obtain a symmetry plane for $\widetilde\Sigma$. 

Hence we have shown that if some geodesic that limits to $p_0$ intersects $\widetilde\Sigma$ in two or more points, then $\widetilde\Sigma$ lifts to a rotational cylindrically bounded surface $\bar{\Sigma}$ in $\hr.$ If $\bar{\Sigma}$ is not compact then $\bar{\Sigma}$ is a vertical unduloid, and if $\bar{\Sigma}$ is compact we know by Hsiang-Hsiang's theorem \cite{HsiangHsiang} $\bar{\Sigma}$ is a rotational sphere. Therefore, we have proved that in this case $\Sigma\subset\mathcal M$ is either the quotient of a rotational sphere or the quotient of a vertical unduloid.

Now to finish the proof let us assume that every geodesic that limits to $p_0$ intersects $\widetilde\Sigma$ in \textit{at most} one point. In particular, the geodesic $\beta$ that limits to $p_0$ and passes through $\tilde p\in \tilde{c}(s_1)$ intersects $\widetilde\Sigma$ only at $\tilde p.$ Write $\beta^{-}$ to denote the arc of $\beta$
 between $\beta(-\infty)$ and $\tilde p$ (see Figure \ref{beta}).
 
\begin{figure}[h!]
\centering
\includegraphics[height=4.7cm]{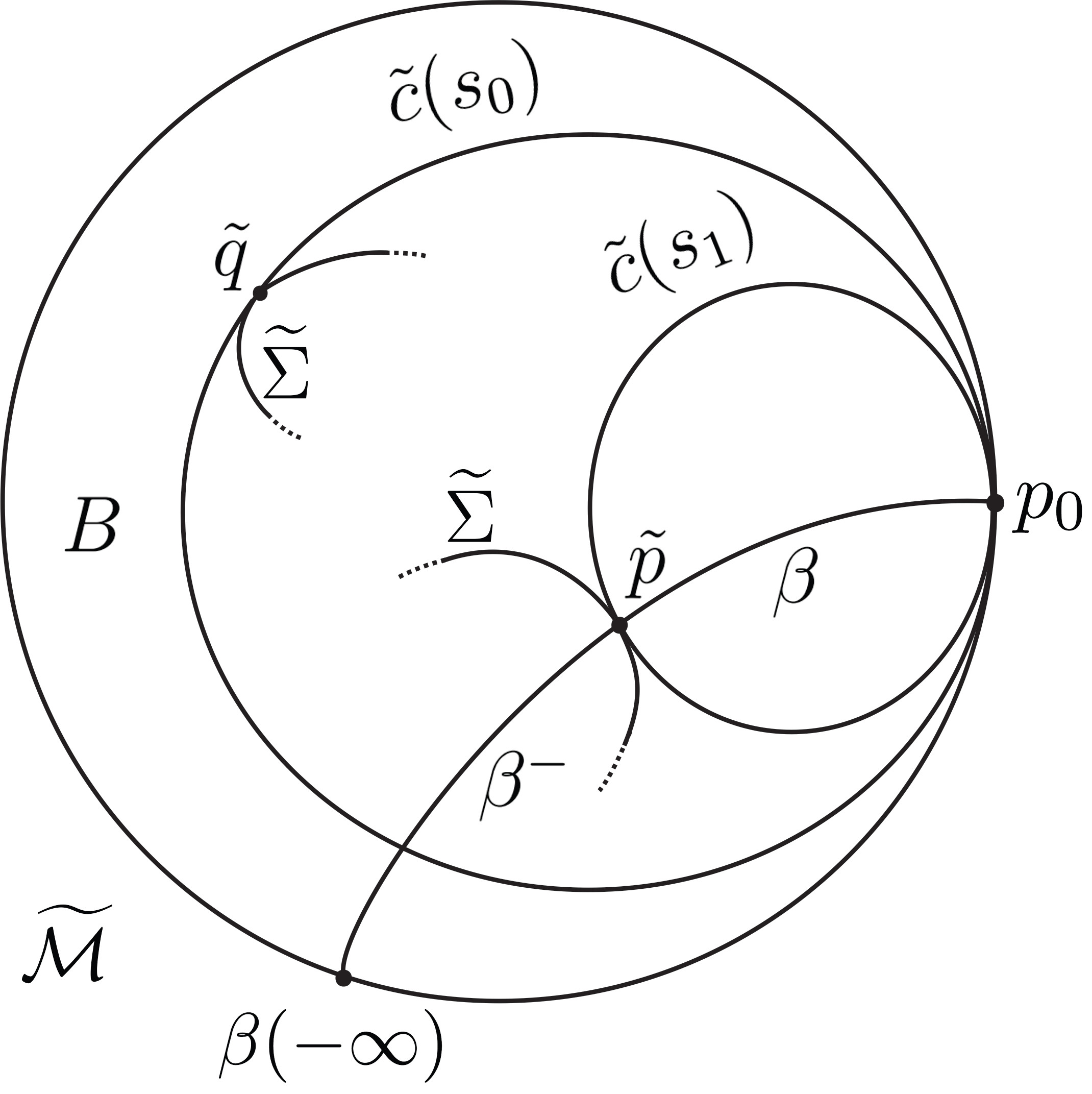}
\caption{Geodesic $\beta.$}
\label{beta}
\end{figure}

As $\beta\cap\widetilde\Sigma=\{\tilde p\},$ we have $\beta^{-}\cap\widetilde\Sigma=\emptyset$ and then $\beta^{-}\subset B,$ since $\widetilde\Sigma$ separates $\widetilde{\mathcal M}.$

Hence at the point $\tilde p\in \widetilde\Sigma\cap\tilde{c}(s_1),$ the mean curvature vectors of $\widetilde\Sigma$ and $\tilde{c}(s_1)$ point to the mean convex side of $\tilde{c}(s_1)$ and $\widetilde\Sigma$ lies on the mean concave side of $\tilde{c}(s_1),$ then by comparison we get $H\leq \frac{1}{2}.$ But we already know that $H\geq \frac{1}{2}.$ Hence $H=\frac{1}{2}$ and $\widetilde\Sigma=\tilde{c}(s_1),$ by the maximum principle. Therefore, in this case we conclude $\Sigma=\tor(s_1).$
\end{proof}

\begin{remark}
Note that a vertical unduloid, contained in a cylinder $D\times \rr$ and invariant by a vertical translation $T(l)$ in $\hr,$ passes to the quotient space $\mathcal M=\hr/[\psi,T(h)]$ as an embedded surface if the quotient of $D$ is embedded and the number $l$ is a multiple of $h$. Analogously, a rotational sphere of height $l$ contained in a cylinder $D\times\rr$ in $\hr$ passes to the quotient as an embedded surface if $l<h$ and the quotient of $D$ is embedded in $\mathcal M.$
\end{remark}

\section{Periodic minimal surfaces}
\label{sec-examples}

In this section we are interested in constructing some new examples of periodic minimal surfaces in $\hr$ invariant by a subgroup of isometries, which is either isomorphic to $\mathbb Z^2,$ or generated by a vertical translation, or generated by a screw motion. In fact, we only consider subgroups generated by a parabolic translation $\psi$ along a horocycle and/or a vertical translation $T(h),$ for some $h>0$. 

Periodic minimal surfaces in $\rr^3$ have received great attention since Riemann, Schwarz, Scherk (and many others) studied them. They also appear in the natural sciences. In \cite{MeeksRosenberg1}, Meeks and Rosenberg proved that a periodic properly embedded minimal surface of finite topology (in $\rr^3/G, G$ a discrete group of isometries acting properly discontinuously on $\rr^3, G\neq (1)$) has finite total curvature and the ends are asymptotic to standard ends (planar, catenoidal, or helicoidal). In a joint paper with Hauswirth \cite{HauswirthMenezes}, we consider the same study for periodic minimal surfaces in $\hr.$ The first step is to understand what are the possible models for the ends in the quotient. This is one reason to construct examples.

\subsection{Doubly periodic minimal surface}
\label{sec-41}

In $\hh$ consider two geodesics $\alpha,\beta$ that limit to the same point at infinity, say $\alpha(-\infty)=p_0=\beta(-\infty).$ Denote $B=\alpha(+\infty)$ and $D=\beta(+\infty).$ Take a geodesic $\gamma$ contained in the region bounded by $\alpha$ and $\beta$ that limits to the same point $p_0$ at infinity. Parametrize these geodesics so that $\alpha(t)\rightarrow B,\beta(t)\rightarrow D$ and $\gamma(t)\rightarrow p_0$ when $t\rightarrow+\infty.$



Fix $h>\pi$ and consider the following Jordan curve:
$$\begin{array}{rcl}
\Gamma_{t}&=& \overline{(\alpha(t),0)\, (\gamma(t),0)}\cup\overline{(\alpha(t),0)\, (\alpha(t),h)}\cup\overline{(\beta(t),0)\, (\gamma(t),0)}\\
&&\\
&&\cup\overline{(\beta(t),0)\, (\beta(t),h)}\cup\overline{(\alpha(t),h)\, (\gamma(t),h)}\cup\overline{(\beta(t),h)\, (\gamma(t),h)}
\end{array}$$
as illustrated in Figure \ref{figure11}.

\begin{figure}[h]
 \centering
\includegraphics[height=5cm]{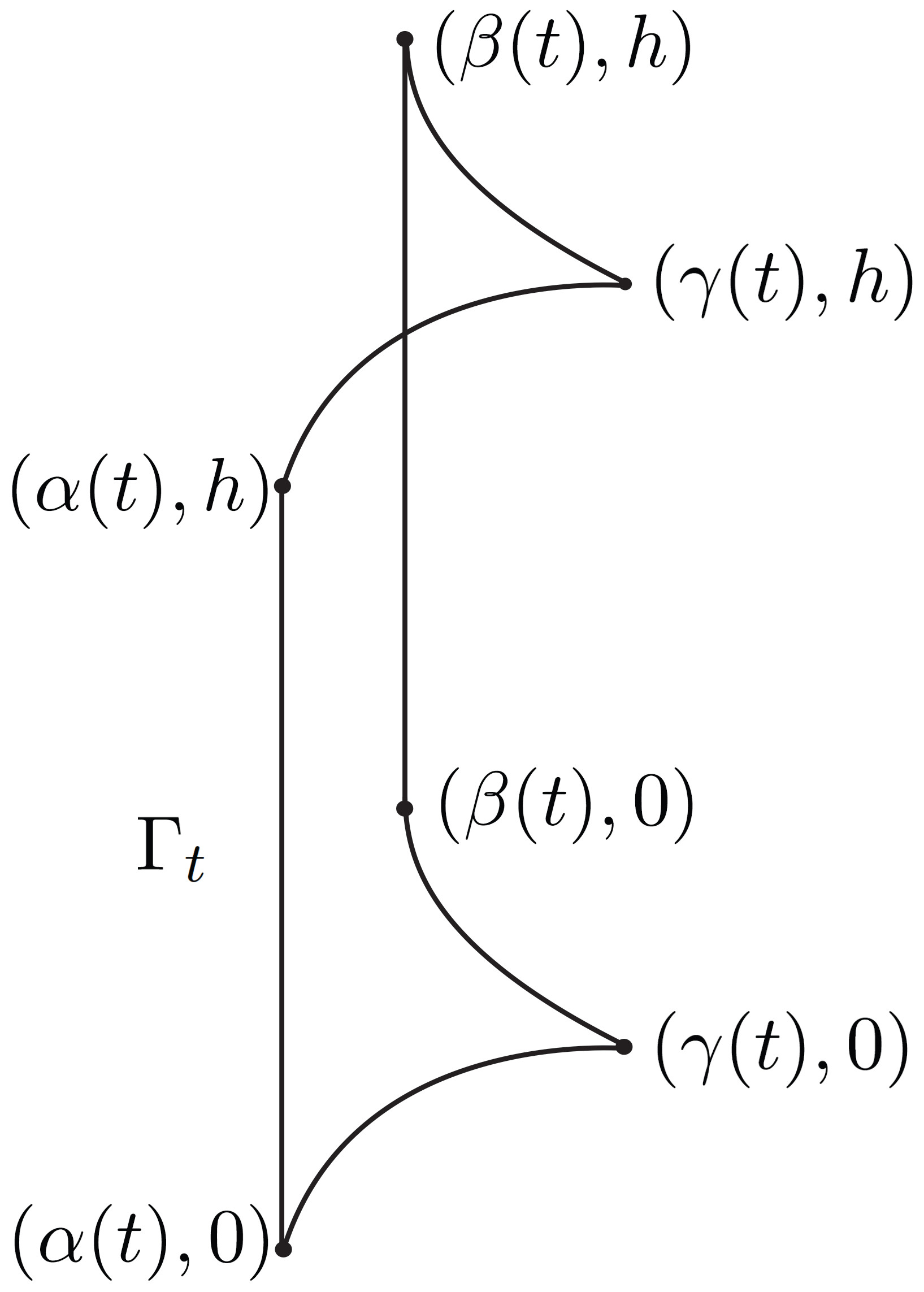}
\caption{Curve $\Gamma_{t}$.}
\label{figure11}
\end{figure}

Consider a least area embedded minimal disk $\Sigma_{t}$ with boundary $\Gamma_{t}.$ Let $Y$ be the Killing field whose flow $(\phi_l)_{l\in\mathbb R}$ is given by translation along the geodesic $\gamma.$  Notice that $\Gamma_{t}$ is transversal to the Killing field $Y.$ Hence given any geodesic $\bar{\gamma}$ orthogonal to $\gamma,$ we can use the Alexandrov reflection technique with the foliation of $\hr$ by the vertical planes $(\phi_l(\bar{\gamma}))_{l\in\mathbb R}$ to show that $\Sigma_{t}$ is a $Y$-Killing graph. In particular, $\Sigma_{t}$ is stable and unique (see \cite{NR-Simply}, Lemma 2.1). This gives uniform curvature estimates for $\Sigma_{t_0}$ for points far from the boundary (see \cite{RosenbergSouam}, Main Theorem). Rotating $\Sigma_t$ by angle $\pi$ around the geodesic arc $\overline{(\alpha(t),0),(\gamma(t),0)}$ gives a minimal surface that extends $\Sigma_t,$ has int$\overline{(\alpha(t),0),(\gamma(t),0)}$ in its interior, and is still a $Y$-Killing graph. Thus we get uniform curvature estimates for $\Sigma_t$ in a neighborhood of $\overline{(\alpha(t),0),(\gamma(t),0)}.$ This is also true for the three other horizontal geodesic arcs in $\Gamma_{t}.$

Observe that for any $t,$ $\Sigma_{t}$ stays in the half-space determined by $\overline{BD}\times\rr$ that contains $\Gamma_{t},$ by the maximum principle.

As $h>\pi,$ we can use as a barrier the minimal surface $S_h\subset \mathbb H^2\times (0,h)$ which is a vertical bigraph with respect to the horizontal slice $\{t=\frac{h}{2}\}.$ The surface $S_h$ is invariant by translations along the horizontal geodesic $\gamma_0=\{x=0\}$ and its asymptotic boundary is $(\tau\times\{0\})\cup \overline{(0,1,0)(0,1,h)}$ $\cup (\tau\times\{h\})\cup \overline{(0,-1,0)(0,-1,h)},$ where $\tau=\partial_{\infty}\mathbb H^2\cap\{x>0\}.$ For more details about the surface $S_h,$ see \cite{Mazet, MazetRodriguezRosenberg, SaEarp}.

For $l$ sufficiently large, the translated surface $\phi_l(S_h)$ does not intersect $\Sigma_{t};$ hence the surface $\Sigma_{t}$ is contained between $\phi_{l}(S_h)$ and $\overline{BD}\times\rr.$ 
 
Notice that when $t\rightarrow +\infty,$ $\Gamma_{t}$ converges to $\Gamma,$ where
 $$\begin{array}{rcl}
 \Gamma&=&(\alpha\times\{0\})\cup(\beta\times\{0\})\cup(\alpha\times\{h\})\cup\\
 &&(\beta\times\{h\}) \cup\overline{(D,0)(D,h)}\cup\overline{(B,0)(B,h)}.
 \end{array}$$
 
Therefore, as we have uniform curvature estimates and barriers at infinity, there exists a subsequence of $\Sigma_{t}$ that converges to a minimal surface $\Sigma,$ where $\Sigma$ lies in the region of $\mathbb H^2\times[0,h]$ bounded by $\alpha\times \rr,$ $\beta\times\rr,$ $\overline{BD}\times\rr$ and $\phi_{l}(S_h);$ with boundary $\partial\Sigma=\Gamma.$ 

Hence the surface obtained by reflection in all horizontal boundary geodesics of $\Sigma$ is invariant by $\psi^2$ and $T(2h),$ where $\psi$ is the horizontal translation along horocycles that sends $\alpha$ to $\beta.$ Moreover, this surface in the quotient space $\hr/[\psi^2, T(2h)]$ is topologically a sphere minus four points. Two ends are asymptotic to vertical planes and two are asymptotic to horizontal planes (cusps), all of them with finite total curvature.

\begin{proposition}
There exists a doubly periodic minimal surface (invariant by horizontal translations along a horocycle and by a vertical translation) such that, in the quotient space, this surface is topologically a sphere minus four points, with two ends asymptotic to vertical planes and two asymptotic to horizontal planes, all of them with finite total curvature.
\end{proposition}

\subsection{Vertically periodic minimal surfaces}

Take $\alpha$ any geodesic in $\mathbb H^2\times \{0\}.$ For $h>\pi,$ consider the vertical segment $\alpha(-\infty)\times [0,2h],$ and a point $p\in \partial_{\infty}\mathbb H^2,$ $p\neq \alpha(-\infty), \alpha(+\infty).$ For some small $\epsilon>0,$ consider the asymptotic vertical segment joining $(p,\epsilon)$ and $(p,h+\epsilon).$ Now, connect $(p,\epsilon)$ to $(\alpha(-\infty),0)$ and $(p,h+\epsilon)$ to $(\alpha(-\infty),2h)$ by curves in $\partial_\infty \hr$, whose tangent vectors are never horizontal or vertical, and so that the resulting curve $\Gamma$ is differentiable. Also, consider the horizontal geodesic $\beta$ connecting $p$ to $\alpha(+\infty).$

Parametrize $\alpha$ by arc length, and consider $\gamma$ a geodesic orthogonal to $\alpha$ passing through $\alpha(0).$ Let us denote by $d(t)$ the equidistant curve to $\gamma$ at a distance $|t|$ that intersects $\alpha$ at $\alpha(t).$ For each $t$ consider a curve $\Gamma_t$ contained in the plane $d(t)\times\rr$ with endpoints $(\alpha(t),0)$ and $(\alpha(t),2h)$ such that $\Gamma_t$ is contained in the region $R$ bounded by $\alpha\times\rr,\beta\times\rr, \hh\times\{0\}$ and $\hh\times\{2h\}$ with the properties that its tangent vectors do not point in the horizontal direction and $\Gamma_t$ converges to $\Gamma$ when $t\rightarrow -\infty.$ In particular, $\Gamma_t$ is transversal to the Killing field $Y$ whose flow $(\phi_l)_{l\in\mathbb R}$ is given by translation along the geodesic $\gamma.$

Write $\alpha_t$ to denote the vertical segment $\alpha(t)\times[0,2h]$ (see Figure \ref{exem21}).

\begin{figure}[h!]
 \centering
\includegraphics[width=6.5cm]{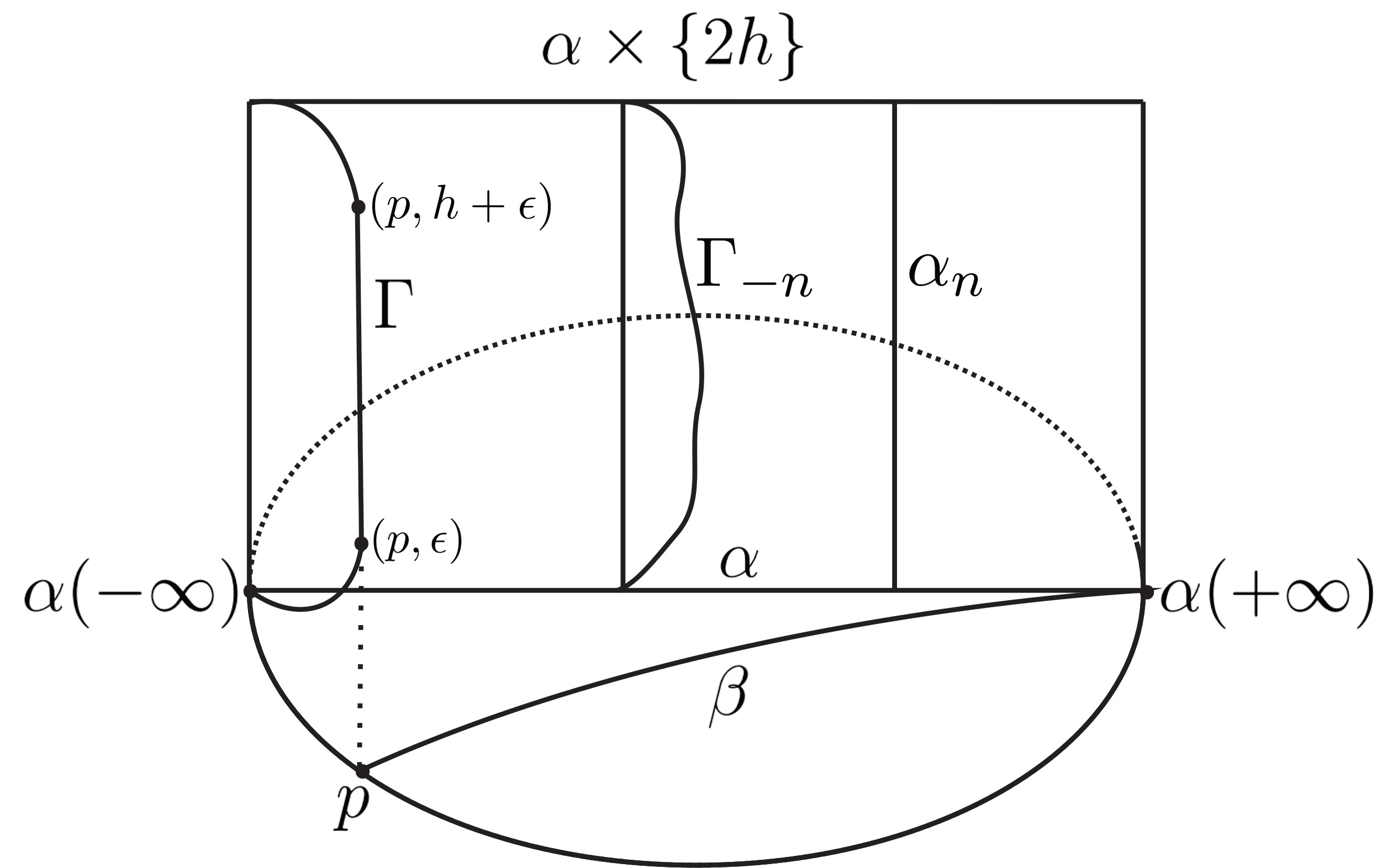}
\caption{Curves $\Gamma_{-n}$ and $\Gamma.$}
\label{exem21}
\end{figure}

For each $n,$ let $\Sigma_n$ be the solution to the Plateau problem with boundary $$\Gamma_{-n}\cup(\alpha(\left[-n,n\right])\times\{0\})\cup(\alpha(\left[-n,n\right])\times\{2h\})\cup\alpha_{n}.$$ By our choice of the curves $\Gamma_t,$ the boundary $\partial\Sigma_n$ is transverse to the Killing field $Y.$ Using the foliation of $\hr$ by the vertical planes $\phi_l(\alpha), l\in\mathbb R,$ the Alexandrov reflection technique shows that $\Sigma_n$ is a $Y$-Killing graph. In particular, it is unique and stable \cite{NR-Simply}, and we have uniform curvature estimates far from the boundary \cite{RosenbergSouam}. When we apply the rotation by angle $\pi$ around $\alpha\times\{0\}$ to the minimal surface $\Sigma_n,$ we get another minimal surface which extends $\Sigma_n,$ is still a Y-Killing graph and has int$(\alpha(\left[-n,n\right])\times\{0\})$ in its interior. Hence we obtain uniform curvature estimates for $\Sigma_n$ in a neighborhood of $\alpha(\left[-n,n\right])\times\{0\}.$ This is also true for $\alpha(\left[-n,n\right])\times\{2h\}$ and $\alpha_{n}.$

Observe that $\Sigma_n$ is contained in the region $R,$ for all $n.$

By our choice of $\Gamma,$ for each $q\in\Gamma,$ we can consider two translations of the minimal surfaces $S_h$ (considered in the last section) that pass through $q$ so that one of them has asymptotic boundary under $\Gamma,$ the other one has asymptotic boundary above $\Gamma$ and their intersection with $\Gamma$ is just the point $q$ considered or is the whole vertical segment $\overline{(p,\epsilon)(p,h+\epsilon)}$. Hence, the envelope of the union of all these translated surfaces $S_h$ forms a barrier to $\Sigma_n,$ for all $n.$ 

Then, as we have uniform curvature estimates and barriers at infinity, we conclude that there exists a subsequence of $\Sigma_n$ that converges to a minimal surface $\Sigma$ with $(\alpha(+\infty)\times[0,2h])\cup\Gamma=\partial_\infty\Sigma,$ and then $$\partial\Sigma=\Gamma\cup(\alpha\times\{0\})\cup(\alpha\times\{2h\})\cup(\alpha(+\infty)\times[0,2h]).$$

Therefore, the surface obtained by reflection in all horizontal boundary geodesics of $\Sigma$ is a vertically periodic minimal surface invariant by $T(4h).$ In the quotient space this minimal surface has two ends; one is asymptotic to a vertical plane and has finite total curvature, while the other one is topologically an annular end and has infinite total curvature. 

\begin{proposition}
There exists a singly periodic minimal surface (invariant by a vertical translation) such that, in the quotient space, this surface has two ends; one end is asymptotic to a vertical plane and has finite total curvature, while the other one is topologically an annular end and has infinite total curvature.
\end{proposition}

\subsection{Periodic minimal surfaces invariant by screw motion}

Now we construct some examples of periodic minimal surfaces invariant by a screw motion, that is, invariant by a subgroup of isometries generated by the composition of a horizontal translation with a vertical translation. 

Consider two geodesics $\alpha, \beta$ in $\mathbb H^2$ that limit to the same point at infinity, say $\alpha(+\infty)=p_0=\beta(+\infty)$. For $h>\pi,$ consider a smooth curve $\Gamma$ contained in the asymptotic boundary of $\hr,$ connecting $(\alpha(-\infty),2h)$ to $(\beta(-\infty),0)$ and such that its tangent vectors are never horizontal or vertical. Also, take a point $p\in\partial_\infty\mathbb H^2$ in the halfspace determined by $\beta\times\rr$ that does not contain $\alpha.$

For some small $\epsilon>0,$ consider the asymptotic vertical segment joining $(p,\epsilon)$ and $(p,h+\epsilon).$ Now, connect $(p,\epsilon)$ to $(p_0,0)$ and $(p,h+\epsilon)$ to $(p_0,2h)$ by curves in $\partial_\infty \hr$ whose tangent vectors are never horizontal or vertical, and such that the resulting curve $\widehat{\Gamma}$ is differentiable.

Parametrize $\alpha$ by arc length, and consider $\gamma$ a geodesic orthogonal to $\alpha$ passing through $\alpha(0).$ Let us denote by $d(t)$ the equidistant curve to $\gamma$ at a distance $|t|$ that intersects $\alpha$ at $\alpha(t).$ For each $t,s$ consider two curves $\widehat{\Gamma_t}$ and $\Gamma_s$ contained in the plane $d(t)\times\rr$ and $d(s)\times\rr,$ respectively, with the properties that their tangent vectors are never horizontal, $\widehat{\Gamma_t}$ joins $(\alpha(t),2h)$ to $(\beta(t),0),$ $\Gamma_s$ joins $(\alpha(s),2h)$ to $(\beta(s),0),$ $\widehat{\Gamma_t}$ converges to $\widehat{\Gamma}$ when $t\rightarrow+\infty,$ $\Gamma_s$ converges to $\Gamma$ when $s\rightarrow-\infty,$ and both curves are contained in the region $R$ bounded by $\alpha\times\rr,\theta\times\rr, \hh\times\{0\}$ and $\hh\times\{2h\},$ where $\theta$ is the geodesic with endpoints $p$ and $\beta(-\infty)$ (see Figure \ref{exem31}).

\begin{figure}[h]
 \centering
\includegraphics[width=6cm]{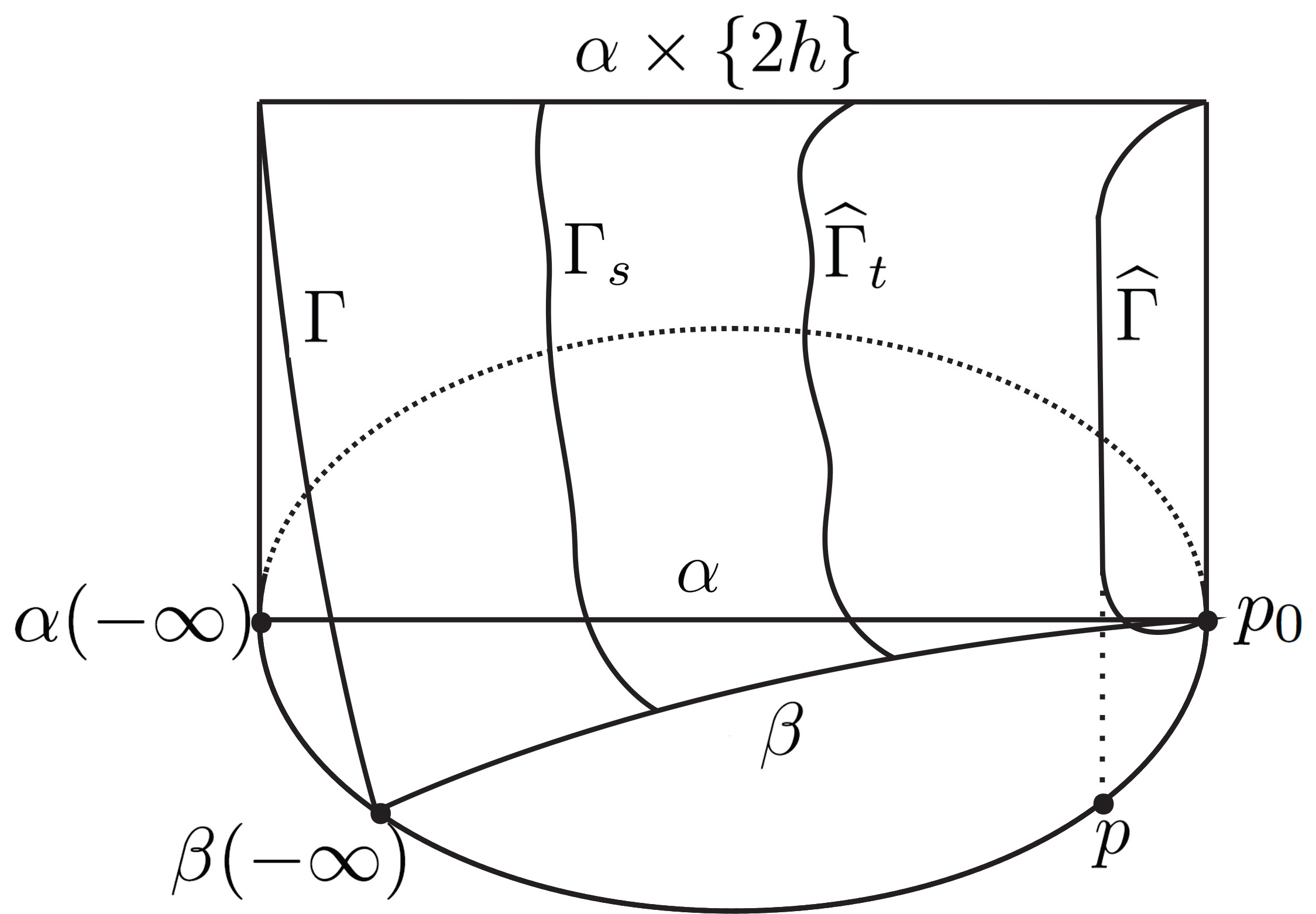}
\caption{Curves $\widehat{\Gamma}_t, \Gamma_s, \widehat{\Gamma}$ and $\Gamma.$}
\label{exem31}
\end{figure}

For each $n,$ let $\Sigma_n$ be the solution to the Plateau problem with boundary $$\Gamma_{-n}\cup(\alpha([-n,n])\times\{2h\})\cup\widehat{\Gamma}_n\cup(\beta([-n,n])\times\{0\}).$$ The surface $\Sigma_n$ is contained in the region $R.$ As in the previous section, we can show that $\Sigma_n$ is a Killing graph, then it is stable, unique and we have uniform curvature estimates far from the boundary. Rotating $\Sigma_n$ by angle $\pi$ around the geodesic $\alpha\times\{2h\}$ we get a minimal surface which extends $\Sigma_n,$ is still a Killing graph, and has int$(\alpha([-n,n])\times\{2h\})$ in its interior. Hence we get uniform curvature estimates for $\Sigma_n$ in a neighborhood of $\alpha([-n,n])\times\{2h\}.$ This is also true for $\beta([-n,n])\times\{0\}.$ Thus when $n\rightarrow+\infty$, there exists a subsequence of $\Sigma_n$ that converges to a minimal surface $\Sigma$ with $\Gamma\cup\widehat{\Gamma}\subset\partial_\infty\Sigma_n.$ Using the same argument as before with suitable translations of the surface $S_h$ as barriers, we conclude that in fact $\partial_\infty\Sigma=\Gamma\cup\widehat{\Gamma},$ and then $\partial\Sigma=\Gamma\cup(\alpha\times\{2h\})\cup(\beta\times\{0\})\cup\widehat{\Gamma}.$

The surface obtained by reflection in all horizontal boundary geodesics of $\Sigma$ is a minimal surface invariant by $\psi^2\circ T(4h),$ where $\psi$ is the horizontal translation along horocycles that sends $\alpha$ to $\beta.$ There are two annular embedded ends in the quotient, each of infinite total curvature. 

\begin{proposition}
There exists a minimal surface invariant by a screw motion such that, in the quotient space, this minimal surface has two annular embedded ends, each one of infinite total curvature.
\end{proposition}

Now we will construct another interesting example of a periodic minimal surface invariant by a screw motion.

Denote by $\gamma_0, \gamma_1$ the geodesic lines $\{x=0\}, \{y=0\}$ in $\mathbb H^2,$ respectively. Let $c$ be a horocycle orthogonal to $\gamma_1,$ and consider $p,q\in c$ equidistant points to $\gamma_1.$ Take $\alpha, \beta$ geodesics which limit to $p_0=(1,0)=\gamma_1(+\infty)$ and pass through $p,q,$ respectively. Fix $\epsilon>0$ and $h>\pi.$ Denote the points $A=\alpha(-t_0),C=\alpha(t_0),B=\beta(-t_0), D=\beta(t_0),$ and let us consider the following Jordan curve (see Figure \ref{gammat0}):
$$
\begin{array}{rcl}
\Gamma_{t_0}&=&(\alpha([-t_0,t_0])\times\{-\epsilon\})\cup\overline{(C,-\epsilon)(D,0)}\cup(\beta ([-t_0,t_0])\times\{0\})\\
&&\\
&&\cup(\alpha([-t_0,t_0])\times\{h\})\cup\overline{(C, h)(D, h+\epsilon)}\cup(\beta([-t_0,t_0])\times\{h+\epsilon\})\\
&&\\
&&\cup\overline{(A, -\epsilon)(A, h)}\cup\overline{(B, 0)(B, h+\epsilon)}.
\end{array}
$$

\begin{figure}[h]
\centering
\includegraphics[height=5cm]{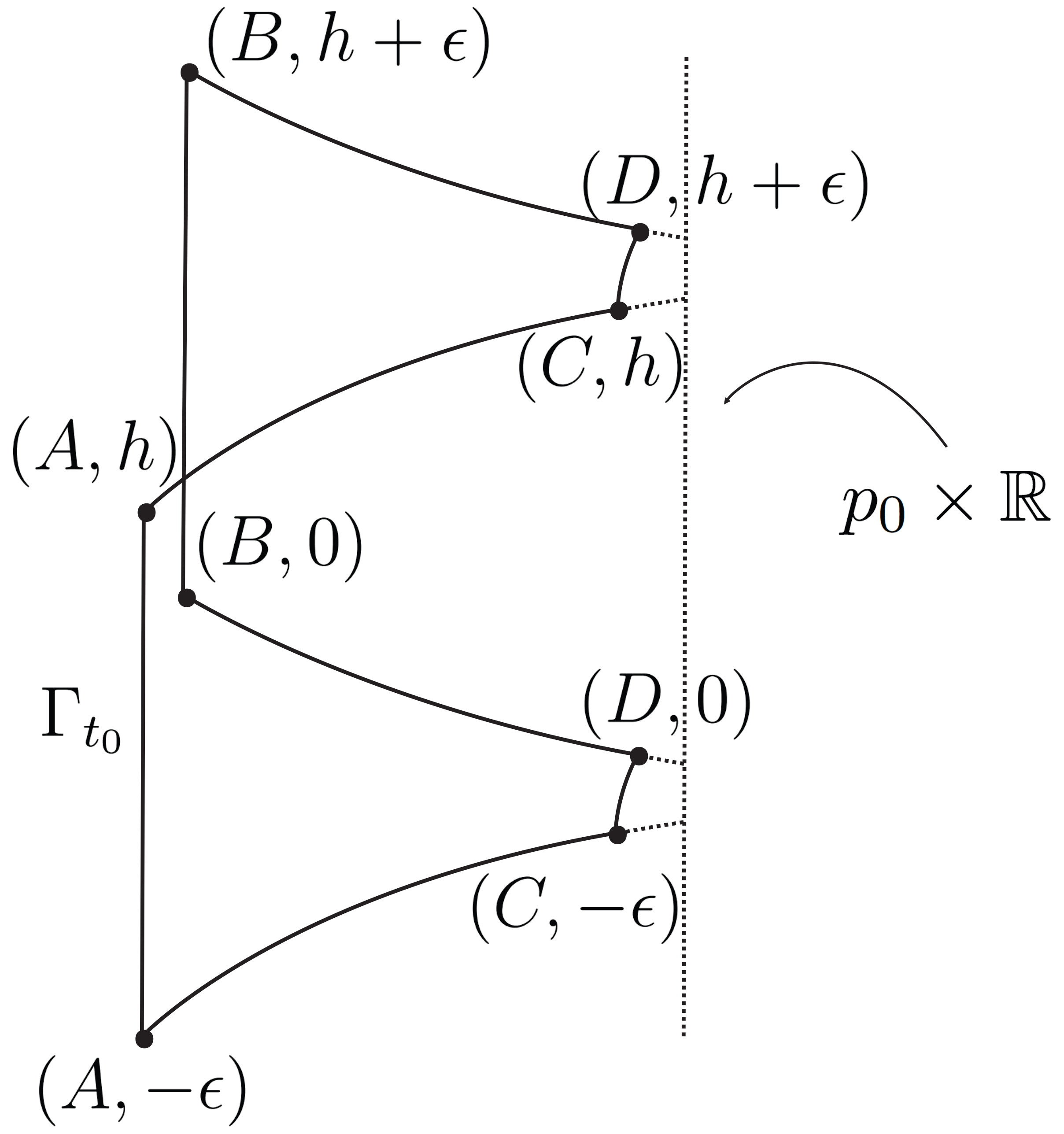}
\caption{Curve $\Gamma_{t_0}$.}
\label{gammat0}
\end{figure}

We consider a least area embedded minimal disk $\Sigma_{t_0}$ with boundary $\Gamma_{t_0}.$ 

Denote by $Y_1$ the Killing vector field whose flow $(\phi_l)_{l\in (-1,1)}$ gives the hyperbolic translation along $\gamma_1$ with $\phi_l(0)=$ $(l, 0)$ and $p_0$ as attractive point at infinity. As $\Gamma_{t_0}$ is transversal to the Killing field $Y_1,$ we can prove, using the Alexandrov reflection procedure, that $\Sigma_{t_0}$ is a $Y_1$-Killing graph with convex boundary, in particular, $\Sigma_{t_0}$ is stable and unique \cite{NR-Simply}. This yields uniform curvature estimates far from the boundary \cite{RosenbergSouam}. Rotating $\Sigma_{t_0}$ by angle $\pi$ around the geodesic arc $\alpha([-t_0,t_0])\times\{-\epsilon\}$ gives a minimal surface that extends $\Sigma_{t_0},$ has int$(\alpha([-t_0,t_0])\times\{-\epsilon\})$ in its interior, and is still a $Y_1$-Killing graph. Thus we get uniform curvature estimates for $\Sigma_{t_0}$ in a neighborhood of $\alpha([-t_0,t_0])\times\{-\epsilon\}.$ This is also true for the three other horizontal geodesic arcs in $\Gamma_{t_0}.$

Write $F=\alpha(-\infty), G=\beta(-\infty).$ Observe that, by the maximum principle, for any $t_0,$ $\Sigma_{t_0}$ stays in the halfspace determined by $\overline{FG}\times\rr$ that contains $\Gamma_{t_0}.$

Since $h>\pi,$ we can consider the minimal surface $S_h$ (considered in Section \ref{sec-41}) as a barrier. For $l$ close to $1,$ the translated surface $\phi_l(S_h)$ does not intersect $\Sigma_{t_0}.$ 
 The surface $\Sigma_{t_0}$ is contained between $\phi_{l}(S_h)$ and $\overline{FG}\times\rr.$ 
 When $t_0\rightarrow +\infty,$ $\Gamma_{t_0}$ converges to $\Gamma,$ where
$$\begin{array}{rcl}
\Gamma&=&(\alpha\times\{-\epsilon\})\cup\overline{(p_0,-\epsilon)(p_0,0)}\cup(\beta\times\{0\})\\
&&\\
&&\cup(\alpha\times\{h\})\cup\overline{(p_0, h)(p_0, h+\epsilon)}\cup(\beta\times\{h+\epsilon\})\\
&&\\
&&\cup\overline{(F, -\epsilon)(F, h)}\cup\overline{(G, 0)(G, h+\epsilon)}.
\end{array}$$
 
Using the maximum principle, we can prove that $\Sigma_t$ is contained between $\phi_{l}(S_h)$ and $\overline{FG}\times\rr,$ for all $t>t_0.$ Therefore, there exists a subsequence of the surfaces $\Sigma_{t}$ that converges to a minimal surface $\Sigma,$ where $\Sigma$ lies in the region between $\hh\times\{-\epsilon\}$ and $\hh\times\{h+\epsilon\}$ bounded by $\alpha\times \rr,$ $\beta\times\rr,$ $\overline{FG}\times\rr$ and $\phi_{l}(S_h),$ and has boundary $\partial\Sigma=\Gamma.$

Hence the surface obtained by reflection in all horizontal boundary geodesics of $\Sigma$ is invariant by $\psi^2\circ T(2(h+\epsilon)),$ where $\psi$ is the horizontal translation along horocycles that sends $\alpha$ to $\beta.$ Moreover, this surface in the quotient space has four ends: two vertical ends and two helicoidal ends, all of them with finite total curvature.

\begin{proposition}
There exists a minimal surface invariant by a screw motion such that, in the quotient space, this minimal surface has four ends. Two vertical ends and two helicoidal ends, all of them with finite total curvature.
\end{proposition}

\section{A multi-valued Rado Theorem}
\label{sec-rado}

The aim of this section is to prove a multi-valued Rado theorem for small perturbations of the helicoid. Recall that Rado's theorem says that minimal surfaces over a convex domain with graphical boundaries must be disks which are themselves graphical. We will prove that for certain small perturbations of the boundary of a (compact) helicoid there exists only one compact minimal disk with that boundary. By a compact helicoid we mean the intersection of a helicoid with certain compact regions  in $\hr.$ The idea here originated in the work of Hardt and Rosenberg \cite{HardtRosenberg}. We will apply this multi-valued Rado theorem to construct an embedded minimal surface in $\hr$ whose boundary is a small perturbation of the boundary of a complete helicoid. 

Consider $Y$ the Killing field whose flow $(\phi_\theta)_{\theta\in[0,2\pi)}$ is given by rotations around the $z$-axis. For some $0<c<1,$ let $D=\{(x,y)\in \mathbb H^2; x^2+y^2\leq c\}.$ Take a helix $h$ of constant pitch contained in a solid cylinder $D\times [0,d]$, so that the vertical projection of $h$ over $\hh\times\{0\}$ is $\partial D.$ Let us denote by $\Gamma$ the Jordan curve which is the union of $h$, the two horizontal geodesic arcs joining the endpoints of $h$ to the $z$-axis, and the part of the $z$-axis. Call $\mathcal H$ the compact part of the helicoid that has $\Gamma$ as its boundary. We know that $\mathcal H$ is a minimal surface transversal to the Killing field $Y$ at the interior points. Take $\theta<\pi/4,$ and consider $\mathcal H_1=\phi_{-\theta}(\mathcal H)$ and $\mathcal H_2=\phi_{\theta}(\mathcal H).$ Hence $\mathcal H_1, \mathcal H_2$ are two compact helicoids with boundary $\partial\mathcal H_1=\phi_{-\theta}(\Gamma),$ $\partial\mathcal H_2=\phi_{\theta}(\Gamma).$

Consider $h_0$ a small smooth perturbation of the helix $h$ with fixed endpoints such that $h_0$ is transversal to $Y$ and $h_0$ is contained in the region between $\phi_{-\theta}(h)$ and $\phi_{\theta}(h)$ in $\partial D\times [0,d]$. Call $\Gamma_0$ the Jordan curve which is the union of $h_0,$ the two horizontal geodesic arcs and a part of the $z$-axis, hence $\Gamma_0=(\Gamma\setminus h)\cup h_0$ (see Figure \ref{gamma}).

\begin{figure}[h]
 \centering
\includegraphics[height=3.5cm]{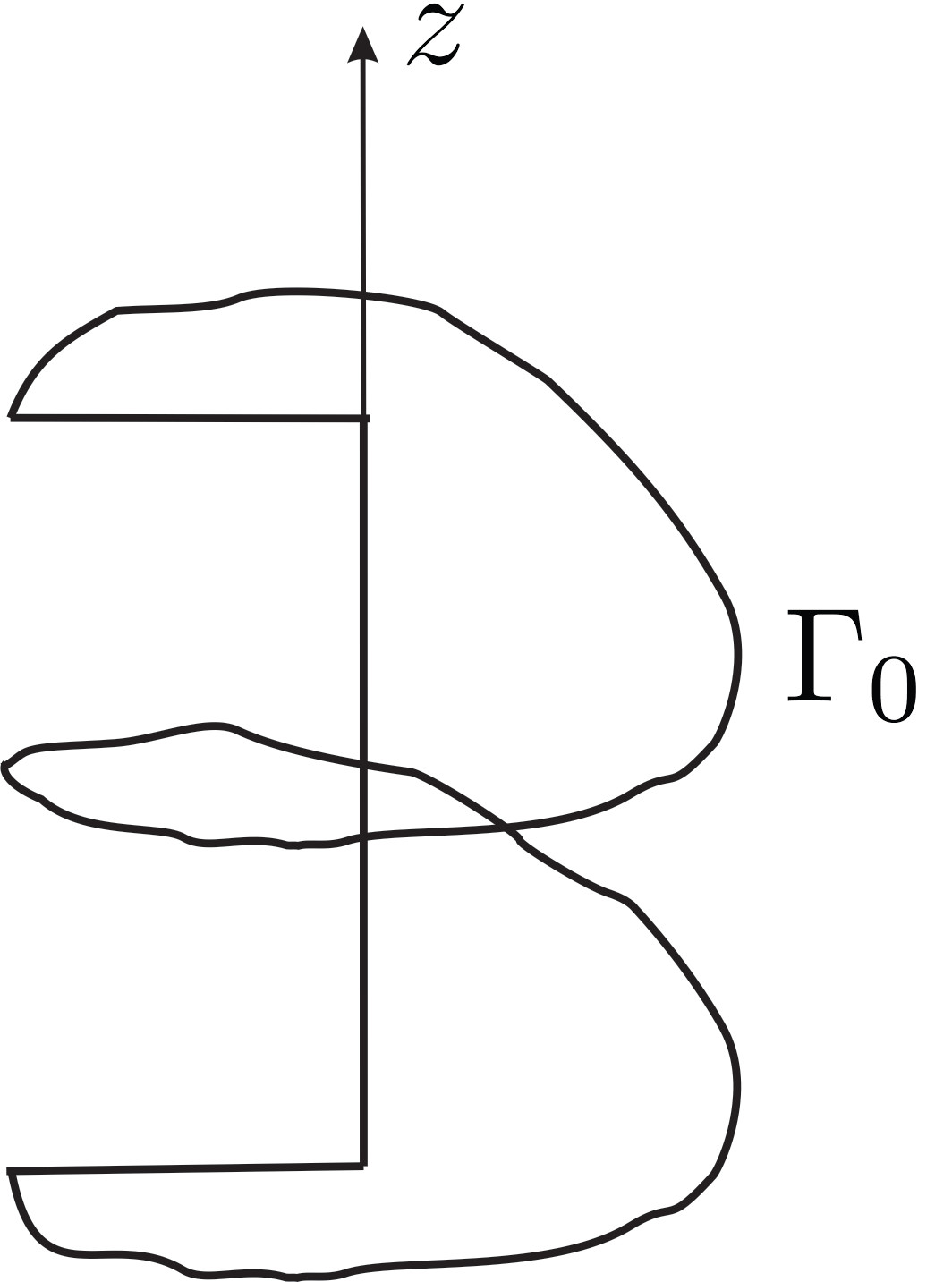}
\caption{Curve $\Gamma_0.$}
\label{gamma}
\end{figure}

Denote by $R$ the convex region bounded by $\mathcal H_1$ and $\mathcal H_2$ in the solid cylinder $D\times [0,d].$ The Jordan curve $\Gamma_0$ is contained in the simply connected region $R$ which has mean convex boundary. Then we can consider the solution to the Plateau problem in this region $R,$ and we get a compact minimal disk $H$ contained in $R$ with boundary $\partial H=\Gamma_0.$

\begin{proposition}
Under the assumptions above, $H$ is transversal to the Killing field $Y$ at the interior points. Moreover, the family $(\phi_{\theta}(H))_{\theta\in[0,2\pi)}$ foliates $D\times [0,d]\setminus \{z\mbox{-axis}\}.$
\label{transversal}
\end{proposition}

\begin{proof}
As $H$ is a disk, we already know that each integral curve of $Y$ intersects $H$ in at least one point. 

Observe that $\phi_{\pi/2}(R)\cap R\setminus \{z\mbox{-axis}\}=\emptyset$ and, in particular, $\phi_{\pi/2}(H)\cap H\setminus \{z\mbox{-axis}\}=\emptyset.$ Moreover, notice that the tangent plane of $\phi_{\pi/2}(H)$ never coincides with the tangent plane of $H$ along the $z$-axis; at each point of the $z$-axis the surfaces are in disjoint sectors. So as one decreases $t$ from $\pi/2$ to $0,$ the surfaces $\phi_{t}(H)$ and $H$ have only the $z$-axis in common and they are never tangent along the $z$-axis.  More precisely, as $t$ decreases, $t > 0$, there can not be a first interior point of contact between the two surfaces by the maximum principle.  Also there can not be a point on the $z$-axis which is a first point of tangency of the two surfaces for $t > 0$, by the boundary maximum principle.  Thus the surfaces $\phi_t(H)$ and $H$ have only the $z$-axis in common for $0< t \leq \pi/2$. The same argument works for $-\pi/2 \leq t< 0$. Thus each integral curve of $Y$ intersects $H$ in exactly one point. 

Denote by $R_2$ the region in $R$ bounded by $H$ and $\mathcal H_2,$ and denote by $N$ the unit normal vector field of $H$ pointing toward $R_2.$ As each integral curve of $Y$ intersects $H$ in exactly one point, we have $\left\langle N, Y\right\rangle\geq 0$ on $H.$ As $\left\langle N, Y\right\rangle$ is a Jacobi function on the minimal surface $H,$ we conclude that necessarily $\left\langle N, Y\right\rangle> 0$ in int$H$. Therefore,  $H$ is transversal to the Killing field $Y$ at the interior points, and the surfaces $\phi_t(H)$ foliate $D\times[0,d]\setminus \{z\mbox{-axis}\}$ for $t\in[0,2\pi)$.
\end{proof}

\begin{theorem}[A multi-valued Rado Theorem]
Under the assumptions above, $H$ is the unique compact minimal disk with boundary $\Gamma_0.$
\label{rado}
\end{theorem}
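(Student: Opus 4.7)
My plan is to prove uniqueness via the Killing-graph strategy that is characteristic of multi-valued Rado theorems: show that every compact minimal disk with boundary $\Gamma$ is a $Y$-Killing graph, and then invoke the standard uniqueness theorem for solutions of the quasilinear Killing-graph equation with prescribed Dirichlet data. The transversality hypothesis $h\pitchfork Y$ is exactly what is needed to make this work, and the smallness of the perturbation is what forces the angle function introduced below to have a controlled sign.

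Let $\Sigma$ be a compact minimal disk with $\partial\Sigma=\Gamma$, and consider the \emph{angle function} $u:=\langle N_{\Sigma},Y\rangle$. Because $Y$ is a Killing field of $\hr$ and $\Sigma$ is minimal, $u$ is a Jacobi function, i.e.\ $L_{\Sigma}u=0$ for the Jacobi operator $L_{\Sigma}=\Delta_{\Sigma}+|A|^{2}+\overline{\mathrm{Ric}}(N,N)$. Consequently the nodal set $\{u=0\}$ is a locally finite union of smooth arcs meeting transversally at isolated interior zeros. The heart of the argument is to show that this nodal set does not meet the interior of $\Sigma$, so that $u$ keeps a constant sign and $\Sigma$ is everywhere transverse to $Y$. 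On the unperturbed helicoid $H_{0}$ a direct computation gives $\langle N_{H_{0}},Y\rangle>0$ away from the $z$-axis; by continuity, for $h$ sufficiently close to $h_{0}$ any minimal disk bounded by $\Gamma$ is $C^{1}$-close to $H_{0}$ (via standard curvature/compactness estimates for stable minimal disks in a slab), so $u_{\Sigma}$ inherits a definite sign along $h$. Along each of the two horizontal geodesic arcs of $\Gamma$, Schwarz reflection extends $\Sigma$ across the arc to a smooth minimal surface, so these arcs are not part of the nodal set of $u$ in the extended surface. Along the $z$-axis piece of $\Gamma$, where $Y$ vanishes, one argues locally that no nodal curve of $u$ can emanate into the interior using that $Y$ vanishes to first order on the axis (a Hopf-type argument). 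Together with $\Sigma$ being a topological disk, a topological count of nodal arcs—each of which has to exit through $\partial\Sigma$—rules out an interior component of $\{u=0\}$.

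Once $u$ has constant sign on the interior, $\Sigma$ is a $Y$-Killing graph. Lifting to the universal cover $\widetilde{M}$ of $\hr\setminus\{z\text{-axis}\}$, where the flow of $Y$ becomes a free $\mathbb{R}$-action, $\Sigma$ projects to an honest graph over a planar domain $\Omega$ in the half-plane orbit space, and the graph function solves the Killing-graph quasilinear elliptic PDE associated with $Y$. This PDE satisfies a comparison principle, so two such graphs over $\Omega$ with the same boundary values coincide. Since the Dirichlet data of any minimal disk bounded by $\Gamma$ is determined by $\Gamma$ itself, we conclude $\Sigma=H$.

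The main obstacle is the second step, namely ruling out interior zeros of the Jacobi function $u$. The difficulty concentrates at the two places where ``boundary control'' of $u$ is not automatic: on the $z$-axis segment (where $u\equiv 0$ because $Y=0$ there) and on the two horizontal geodesic arcs (where $Y$ can have a tangential component to $\Sigma$). Handling both requires a careful local analysis: a Hopf lemma near the rotation axis and a Schwarz-reflection argument across the horizontal geodesics, together with the smallness of the perturbation to propagate the sign of $u$ from $H_{0}$ to a neighbourhood of $h$ in the space of solutions. Overcoming this step is the only nontrivial part of the proof; once it is in place, the remainder is a standard application of the Killing-graph maximum principle.
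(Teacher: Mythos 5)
Your strategy (show the angle function $u=\langle N_{\Sigma},Y\rangle$ has a sign, conclude $\Sigma$ is a $Y$-Killing graph, then apply the comparison principle) is genuinely different from the paper's argument, but it has a gap that I do not see how to close: it implicitly assumes control on an \emph{arbitrary} competitor disk that is only available for stable ones. Concretely, two steps fail. First, you invoke ``standard curvature/compactness estimates for stable minimal disks in a slab'' to conclude that any minimal disk bounded by $\Gamma$ is $C^{1}$-close to $H_{0}$; but the theorem asserts uniqueness among \emph{all} compact minimal disks with boundary $\Gamma$, and a competitor $M$ carries no stability hypothesis, so no such a priori estimates apply to it. Second, and more fundamentally, the topological count of nodal arcs of the Jacobi function $u$ does not rule out a \emph{closed} interior nodal curve: a solution of $L_{\Sigma}u=0$ with $u$ of definite sign on (most of) $\partial\Sigma$ can perfectly well have a compact nodal domain in the interior unless the first eigenvalue of $L_{\Sigma}$ is positive, i.e.\ unless $\Sigma$ is stable. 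The claim ``each nodal arc has to exit through $\partial\Sigma$'' is exactly the point where stability would be needed, and it is not available. (The boundary analysis is also delicate: on the $z$-axis segment $Y\equiv 0$, so $u\equiv 0$ there, and a Hopf-type argument alone does not prevent nodal arcs from entering the interior at such points; the paper has to count precisely how many intersection curves emanate from axis points.)

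The paper circumvents all of this by never looking at a single Jacobi function. Instead it uses the transversality of $h$ to $Y$ to make the rotated disks $H_{\theta}=\phi_{\theta}(H)$ a foliation of the solid region $R=\pi^{-1}(D)\cap S$, intersects a hypothetical second solution $M$ with every leaf, and records the local structure of $M\cap H_{\theta}$ at interior tangencies, at axis points, and at points of $\Gamma$ off the axis. Doubling $M$ along its boundary produces a singular foliation of a sphere all of whose singularities have negative index, contradicting Poincar\'e--Hopf. This argument needs no stability, no smallness beyond transversality, and no curvature estimates for the competitor. If you want to salvage your approach, you would have to either restrict the statement to stable (or least-area) disks, or supply a replacement for the missing global positivity of $L_{\Sigma}$; as written, the proof does not establish the theorem.
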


\begin{proof}
Set $\Gamma_\theta=\phi_\theta (\Gamma_0)$ and $H_\theta=\phi_\theta(H),$ so $H_\theta$ is a minimal disk with $\partial H_\theta=\Gamma_\theta.$ By Proposition \ref{transversal}, the family $(H_\theta)_{\theta\in[0,2\pi)}$ gives a foliation of the region $D\times [0,d]\setminus \{z\mbox{-axis}\}.$

Let $M\neq H$ be another compact minimal disk with boundary $\Gamma_0.$ We will analyse the intersection between $M$ and each $H_\theta.$

First, observe that $M\cap H_\theta \neq\emptyset$ for all $\theta$ and by the maximum principle $M\subset D\times [0,d].$

Fix $\theta_0.$ Given $q\in H_{\theta_0}\cap M,$ then either $q\in \mbox{int}M$ or $q\in \Gamma_0=\partial M.$ 

Suppose $q\in\mbox{int}M.$

If the intersection is transversal at $q,$ then in a neighborhood of $q$ we have that $H_{\theta_0}\cap M$ is a simple curve passing through $q.$ If we let $\theta_0$ vary a little, we see in $M$ a foliation as in part $(a)$ of Figure \ref{fig8}. 

\begin{figure}[h]
 \centering
\includegraphics[height=4.7cm]{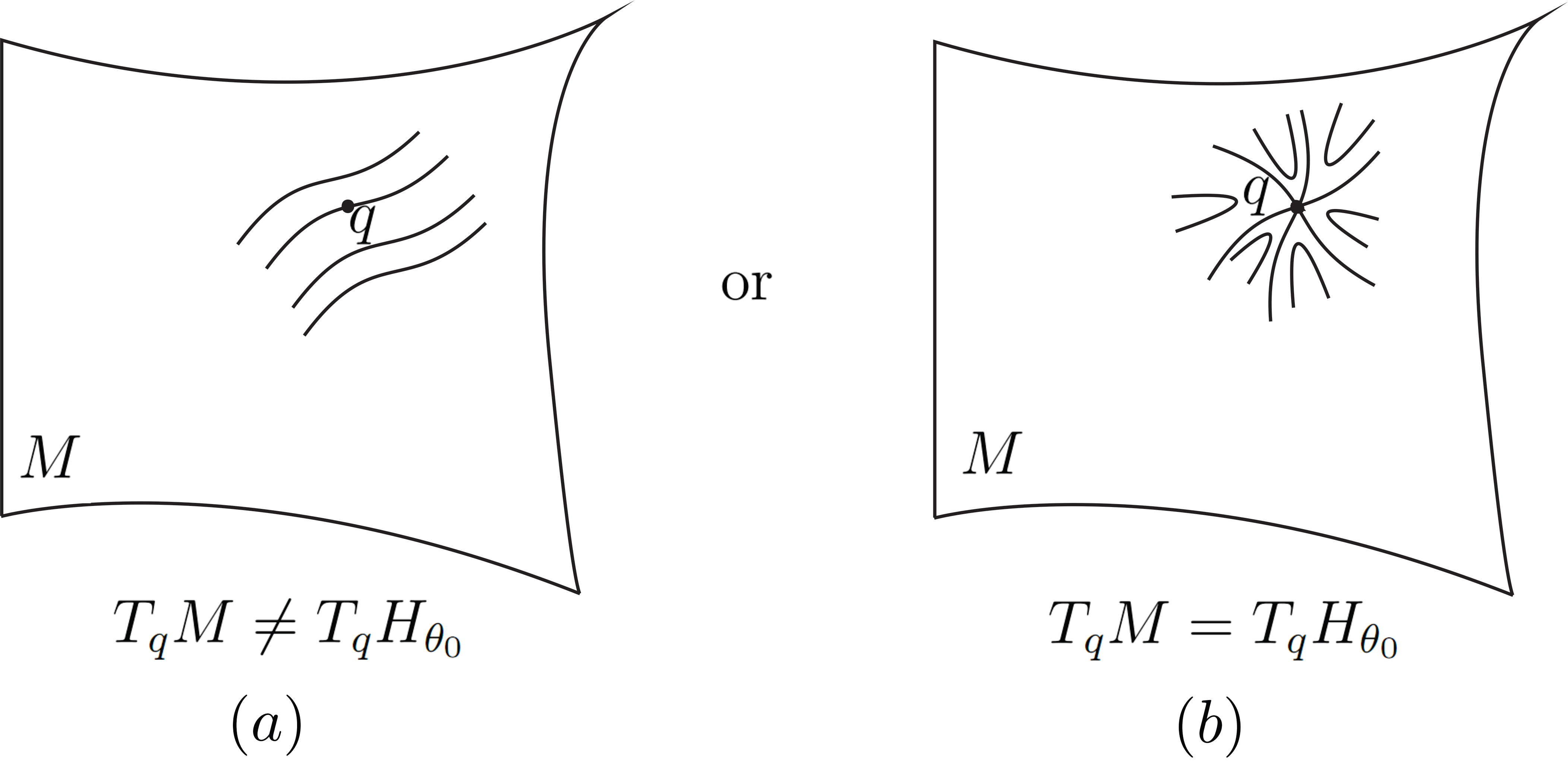}
\caption{$q\in\mbox{int}M.$}
\label{fig8}
\end{figure}

On the other hand, if $M$ is tangent to $H_{\theta_0}$ at $q,$ as the intersection of any two minimal surfaces is locally given by an $n$-prong singularity, that is, $2n$ embedded arcs which meet at equal angles (see \cite{HoffmanMeeks}, Claim $1$ of Lemma $4$), then in a neighborhood of $q$ we have that $H_{\theta_0}\cap M$ consists of $2n$ curves  passing through $q$ and making equal angles at $q$. If we let $\theta_0$ vary a little, we see in $M$ a foliation as in part $(b)$ of Figure \ref{fig8}.

Now suppose $q\in \Gamma_0.$ 

If $q\in\Gamma_0\cap\{z\mbox{-axis}\},$ to understand the trace of $H_{\theta_0}$ on $M$ in a neighborhood of $q$ we proceed as follows. Rotation by angle $\pi$ of $\hr$ about the $z$-axis extends $M$ smoothly to a minimal surface $\widetilde{M}$ that has $q$ as an interior point. Each $H_\theta$ also extends by this rotation (giving a helicoid $\widetilde{H}_\theta$). So in a neighborhood of $q,$ we understand the intersection of $\widetilde{M}$ and $\widetilde{H}_{\theta_0}.$ The surfaces $\widetilde{\mathcal M}$ and $\widetilde{H}_{\theta_0}$ are either transverse or tangent at $q$ as in Figure \ref{fig8}. Then when we restrict to $M\cap H_{\theta_0}$ and let $\theta_0$ vary slightly, we see that the trace of $H_{\theta_0}$ on $M$ near $q$ is as in Figure \ref{figure22}, since the segment on the $z$-axis through $q$ is in $M\cap H_{\theta_0}.$


\begin{figure}[h]
 \centering
\includegraphics[height=3.4cm]{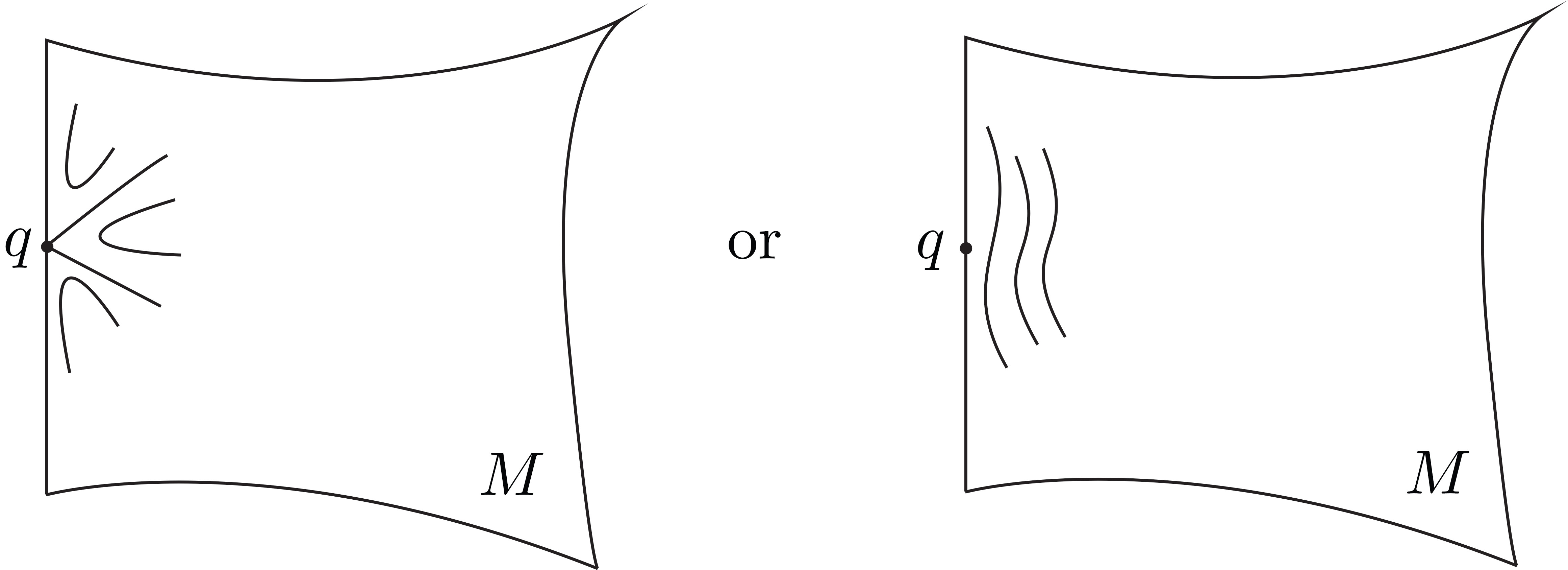}
\caption{$q\in\Gamma_0\cap\{z\mbox{-axis}\}.$}
\label{figure22}
\end{figure}

On the other hand, if $q\in\Gamma_0\setminus\{z\mbox{-axis}\}$ then $\theta_0=0,$ since $\Gamma_\theta\cap\Gamma_0\setminus\{z\mbox{-axis}\}=\emptyset$ for any $\theta\neq 0.$ Note that we cannot have $M\cap H$ homeomorphic to a semicircle in a neighborhood of $q,$ since this would imply that $M$ is on one side of $H$ at $q$ and this contradicts the boundary maximum principle. Thus when we let $\theta_0=0$ vary a little, we have two possible foliations for $M$ in a neighborhood of $q$ as indicated in Figure \ref{fig11}.

\begin{figure}[h]
 \centering
\includegraphics[height=2cm, width=8cm]{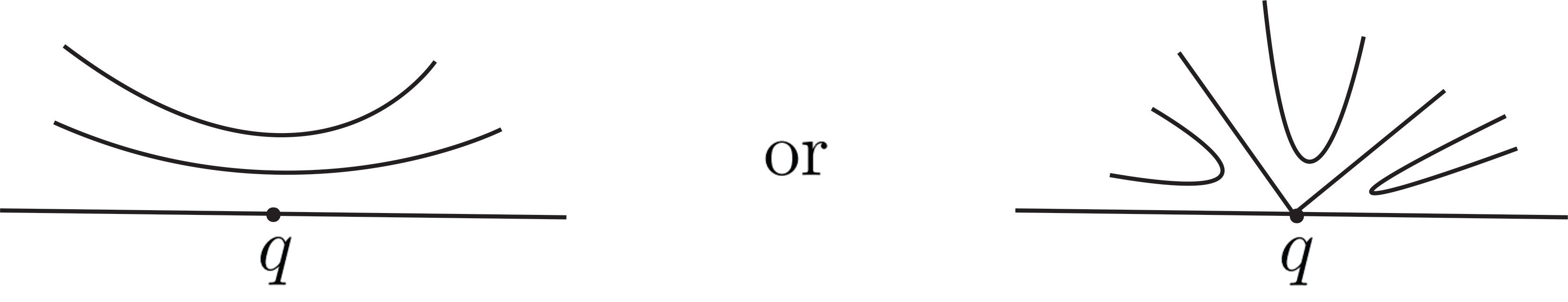}
\caption{$q\in\Gamma_0\setminus\{z\mbox{-axis}\}.$}
\label{fig11}
\end{figure}

Now consider two copies of $M$ and glue them together along the boundary.


Since $M$ is a disk, when we glue these two copies of $M$ we obtain a sphere with a foliation whose singularities have negative index by the analysis above. But this is impossible. Therefore, there is no other minimal disk with boundary $\Gamma_0$ besides $H.$

\end{proof}
 
\begin{remark}
This proof clearly works to prove Theorem 2 for slightly perturbed helicoids in $\mathbb R^3.$
\end{remark}

Now let us construct an example of a complete embedded minimal surface in $\hr$ whose asymptotic boundary is a small perturbation of the asymptotic boundary of a complete helicoid.

Consider the (compact) helix $\beta(u)=(\cos u, \sin u, 2u)$ for $u\in[0, 4\pi].$ Notice that $\beta$ is a multi-graph over $\partial_\infty\hh.$ Take $\theta< \pi/4$ and consider $\alpha(u)$ a small perturbation of $\beta(u)$ in $\partial_\infty\hr$ contained between $\phi_{-\theta}(\beta)$ and $\phi_{\theta}(\beta)$ such that $\alpha$ is transversal to $\partial_t$ and $\partial_\infty\hh \times \{\tau\}$ for any $\tau\in[0, 8 \pi],$ $\alpha(0)=\beta(0),$ $\alpha(4\pi)=\beta(4\pi)$ and so that the vertical distance between $\alpha(s)$ and $\alpha(s+2\pi)$ is bigger than $\pi$ for any $s\in(0,2\pi).$

Now for $t\in[0, 1],$ consider the curves $\alpha_t(u)=(1-t)(0,0,u)+t\alpha(u), u\in[0,4\pi].$  Call $\Gamma_t$ (respectively $\Gamma_1$) the Jordan curve which is the union of $\alpha_t$ (respectively $\alpha$), the two horizontal geodesics joining the endpoints of $\alpha_t$ (respectively $\alpha$)  to the $z$-axis, and the part of the $z$-axis between $z=0$ and $z=8\pi$. Note that when $t$ goes to $1,$ the curves $\Gamma_t$ converge to the curve $\Gamma_1.$ Denote by $H_t$ the minimal disk with boundary $\Gamma_t.$ By Theorem \ref{rado}, $H_t$ is stable and unique. In particular, we have uniform curvature estimates for points far from the boundary. As before, using rotation by angle $\pi$ around horizontal geodesics, we can prove that there is uniform curvature estimates for $H_t$ in a neighborhood of the two horizontal geodesic arcs of $\Gamma_t.$ 

As in the previous section, the envelope of the union of the translated surfaces $S_\pi$ forms a barrier to the sequence $H_t,$ hence we conclude that there exists a subsequence of $H_t$ that converges to a minimal surface $H_1$ with boundary $\partial H_1=\Gamma_1.$ Rotation by angle $\pi$ of $\hr$ around the $z$-axis extends $H_1$ smoothly to a minimal surface which has two horizontal (straight) geodesics in its boundary. Thus the surface obtained by reflection in all horizontal boundary geodesics of $H_1$ is a minimal surface whose asymptotic boundary is a small perturbation of the asymptotic boundary of the complete helicoid in $\hr$ which has $\beta$ contained in its asymptotic boundary.
 

\bibliographystyle{line}
\bibliography{alexandrov}

\begin{thebibliography}{10}

\bibitem{DanielMira}
M.~{Daniel} and P.~{Mira}, `{Existence and uniqueness of constant mean
  curvature spheres in Sol$_3$}', {\em J. Reine Angew. Math.} {\bf 685}
  (2013), 1--32.

\bibitem{DeanTinaglia}
B.~Dean and G.~Tinaglia, `A generalization of {R}ado's theorem for almost
  graphical boundaries', {\em Math. Z.} (4) {\bf 251} (2005), 849--858.

\bibitem{HardtRosenberg}
R.~Hardt and H.~Rosenberg, `Open book structures and unicity of minimal
  submanifolds', {\em Ann. Inst. Fourier (Grenoble)} (3) {\bf 40} (1990),
  701--708.

\bibitem{HauswirthMenezes}
L.~{Hauswirth} and A.~{Menezes}, `{On doubly periodic minimal surfaces in
  $\mathbb H^2\times\mathbb R$ with finite total curvature in the quotient
  space}', {\em arXiv:1305.4813} .

\bibitem{HoffmanMeeks}
D.~Hoffman and W.~Meeks~III, `The asymptotic behavior of properly embedded
  minimal surfaces of finite topology', {\em J. Amer. Math. Soc.} (4) {\bf 2}
  (1989), 667--682.

\bibitem{HsiangHsiang}
W.~T. Hsiang and W.Y. Hsiang, `On the uniqueness of isoperimetric solutions and
  imbedded soap bubbles in noncompact symmetric spaces. {I}', {\em Invent.
  Math.} (1) {\bf 98} (1989), 39--58.

\bibitem{Mazet}
L.~{Mazet}, M.~{Rodr{\'{\i}}guez} and H.~{Rosenberg}, `{Periodic constant mean
  curvature surfaces in $\mathbb H^2 \times \mathbb R$}', {\em To appear in
  Asian J. Math.} .

\bibitem{MazetRodriguezRosenberg}
L.~Mazet, M.~M. Rodr{\'{\i}}guez and H.~Rosenberg, `The {D}irichlet problem for
  the minimal surface equation, with possible infinite boundary data, over
  domains in a {R}iemannian surface', {\em Proc. Lond. Math. Soc. (3)} (6) {\bf
  102} (2011), 985--1023.

\bibitem{MeeksRosenberg1}
W.~H. Meeks~III and H.~Rosenberg, `The geometry of periodic minimal surfaces',
  {\em Comment. Math. Helv.} (4) {\bf 68} (1993), 538--578.

\bibitem{NR-Simply}
B.~Nelli and H.~Rosenberg, `Simply connected constant mean curvature surfaces
  in {${\mathbb H}^2\times\mathbb R$}', {\em Michigan Math. J.} (3) {\bf 54}
  (2006), 537--543.

\bibitem{Rado}
T.~Rad\'{o}, `Some remarks on the problem of plateau', {\em Proc. Natl. Acad.
  Sci. USA} (3) {\bf 16} (1930), 242--248.

\bibitem{RosenbergSouam}
H.~Rosenberg, R.~Souam and E.~Toubiana, `General curvature estimates for stable
  {$H$}-surfaces in 3-manifolds and applications', {\em J. Differential Geom.}
  (3) {\bf 84} (2010), 623--648.

\bibitem{SaEarp}
R.~Sa~Earp, `Parabolic and hyperbolic screw motion surfaces in {$\mathbb
  H^2\times\mathbb R$}', {\em J. Aust. Math. Soc.} (1) {\bf 85} (2008),
  113--143.

\end{thebibliography}

\begin{flushleft}
\textsc{Instituto Nacional de Matem\' atica Pura e Aplicada (IMPA)}

\textsc{Estrada Dona Castorina 110, 22460-320, Rio de Janeiro-RJ, Brazil}

\textit{Email adress:} anamaria@impa.br
\end{flushleft}

\end{document}